\newtheorem{theorem}{Theorem}[section]
\newtheorem{definition}{Definition}[section]
\newtheorem{corollary}{Corollary}[section]
\newtheorem{remark}{Remark}[section]
\newtheorem{proposition}{Proposition}[section]
\newcommand{\bl}[1]{{\color{blue}{#1}}}
\newcommand{\rd}[1]{{\color{red}{#1}}}
\newcommand{\gr}[1]{{\color{green}{#1}}}
\begin{document}

\begin{center}
\textbf{\Large Asymptotic approximations of the solution\\[2mm]
  to a boundary-value problem  in a thin aneurysm-type domain}
\end{center}

\medskip

\begin{center}
{\large A. V. Klevtsovskiy \ \& \ T.~A.~Mel'nyk}\\[4mm]
{\small Department of Mathematical Physics, Faculty of Mathematics and Mechanics\\
Taras Shevchenko National University of Kyiv\\
Volodymyrska str. 64,\ 01601 Kyiv,  \ Ukraine\\
 E-mails: avklevtsovskiy@gmail.com, \ \ melnyk@imath.kiev.ua}
\end{center}

\medskip

\begin{abstract}
A nonuniform Neumann boundary-value problem is considered for the Poisson equation  in a thin $3D$ aneurysm-type domain that consists of
 thin curvilinear cylinders that are joined through an aneurysm of diameter $\mathcal{O}(\varepsilon).$
 A rigorous procedure is developed to construct the complete asymptotic expansion for the solution as the parameter $\varepsilon \to 0.$

The asymptotic expansion consists of a regular part that is located inside of each cylinder, a boundary-layer part near  the base of each cylinder, and an inner part discovered in a neighborhood of the aneurysm.
Terms of the inner part of the asymptotics are special solutions of boundary-value problems in an unbounded domain with different outlets at infinity. It turns out that they have polynomial growth at infinity. By matching these parts, we derive the limit problem  $(\varepsilon =0)$ in the corresponding graph and a recurrence procedure to determine all terms of the asymptotic expansion.

Energetic and  uniform pointwise estimates are proved. These estimates allow us to observe the impact of the aneurysm.
\end{abstract}

\bigskip

{\bf Key words:} \
asymptotic expansion,  \ multiscale  analysis, thin aneurysm-type domains

\medskip

{\bf MOS subject classification:} \  35B40, 74K30, 35C20, 35J05,

\section{Introduction}\label{intro}

In this paper we continue our investigation of boundary-value problems in thin multi-structures with a local
geometric irregularity, which we have begun in \cite{Mel_Klev-2013,Mel_Klev_AA-2016}. Namely,  we modify and generalize our approach for more
complicated structures that consist of thin curvilinear cylinders connected through
a domain of small diameter.

Investigations of various physical and biological processes in channels, junctions and
networks are urgent for numerous fields of natural sciences (see, e.g., \cite{B-P-book,BlanGau03,BlGaMe08,Borysiuk2010,BunCarNaz,C-C-P-2006,CiorPaulin,Ch-Zh-Lukk-Pia-2002,Che-Mel-M2AS-14,MDM-2005,
Gaudiello-Zapp,MN94,M-JMAA-2015,N-book-02,NazPlam,P-book,
P-P-Stokes-1-2015,ZhikovPast} and the references therein). A particular interest is the investigation of the influence of a local geometric heterogeneity in vessels on the blood flow. This is both an aneurysm (a pathological extension of an artery like a bulge) and a stenosis (a pathological restriction of an artery). In \cite{Mardal} the authors classified 12 different aneurysms and proposed a numerical approach for this study.
The aneurysm models have been meshed with 800,000 -- 1,200,000 tetrahedral cells containing three boundary layers.
It was showed that the geometric aneurysm form essentially impacts on the haemodynamics of the blood flow.
However, as was noted by the authors,  {\it the question how to model blood flow with sufficient accuracy  is still open}.

This question was the main motivation for us to begin the study of boundary-value problems in domains of such type and
 to detect the influence of their local geometric irregularity on properties of solutions.
It is clear that such domains are  prototypes of many other biological and engineering structures, but  we prefer to call
them {\it aneurysm-type domains} as comprehensive and concise.

In this paper we use the asymptotic approach for boundary-value problems in thin domains.
The idea is as follows. Let us consider a boundary-value problem in a neighborhood of an aneurysm. After the nondimensionalization, we get a parameter $\varepsilon$  characterizing  thickness of the domain. In many cases, e.g. for brain aneurysms, $\varepsilon$ is a small parameter. 
Thus, it is natural to study the behaviour of this problem as $\varepsilon$ tends to zero. As we can see from Fig.~\ref{Fig-1}, 
 the thin aneurysm-type domain is transformed into a graph and the aneurysm is transformed into the origin if $\varepsilon \to 0.$

\begin{figure}[htbp]
\begin{center}
\includegraphics[width=7cm]{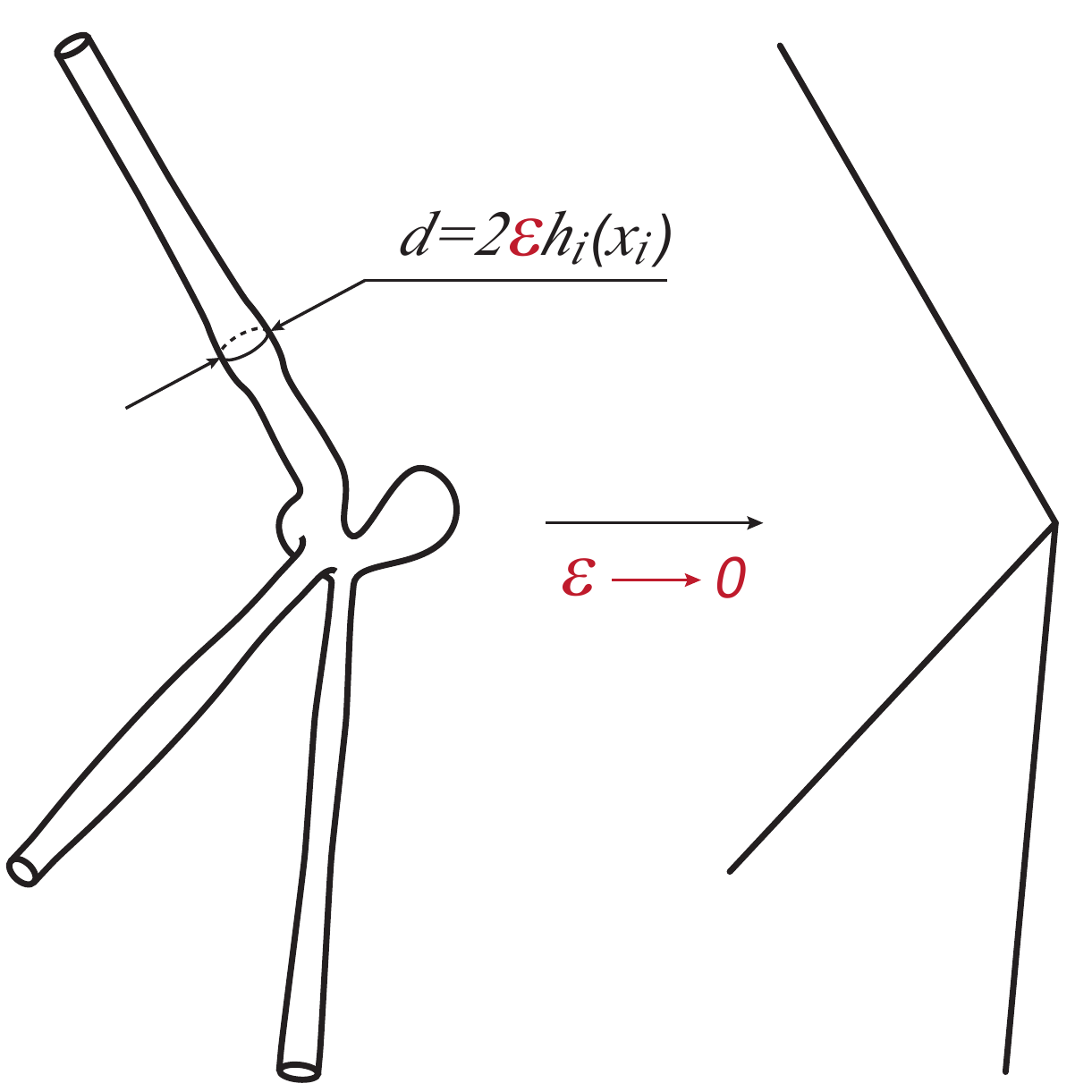}
\caption{Transformation of a thin aneurysm-type domain into a graph}\label{Fig-1}
\end{center}
\end{figure}

So, the aim is to find the corresponding limit problem and detect the impact of the aneurysm.
Obviously, this limit problem in the graph will be simpler, since it is one-dimensional problem.
Then we can either analytically solve the limit problem or apply numerical methods.

There are several approaches to construct asymptotic approximations for solutions to boundary-value problems in thin rod structures.
The method of the partial asymptotic domain decomposition (MPADD), proposed in \cite{Pan-decom-1998},  was applied in
the book \cite[Chapter 4]{P-book} to the following problem in a finite thin rod structure $B_{\varepsilon}$:
$$
\Delta u_\varepsilon = \left\{
             \begin{array}{ll}
               f^e(\widetilde{x}_1), & \hbox{in} \ S_0,
\\
               0, & \hbox{in} \ \Pi_{x_0} := B_{\varepsilon}\setminus S_0,
             \end{array}
           \right.
$$
$$
u_\varepsilon|_{\partial_1 B_{\varepsilon}} = 0, \quad \frac{\partial u_\varepsilon}{\partial n}|_{\partial_2 B_{\varepsilon}} = 0.
$$
Here $S_0$ is the union of sections of the rod structure $B_{\varepsilon},$ $\Pi_{x_0}$ is the connected component of  $B_{\varepsilon}\setminus S_0$ containing the node $x_0.$  The main idea of this method is to reduce the problem to a simplified form on $S_0,$ where the regular asymptotics of the solution is located. The initial formulation is kept on a small neighbourhood of the domain $\Pi_{x_0},$ where the asymptotic behavior is singular.
 Then these two models are coupled by some special interface conditions that are derived from
 some projection procedure. For this,  the author proposed a method of  redistribution of constants to have the boundary-layer solutions exponentially decaying at infinity. As a result, for the leading term of the asymptotic expansion the following estimate was obtained:
\begin{equation}\label{P}
\frac{\|u_\varepsilon - v\|_{L^2(B_{\varepsilon})}}{\sqrt{meas (B_{\varepsilon})}} = \mathcal{O}(\sqrt{\varepsilon})\quad \text{as} \quad \varepsilon \to 0.
\end{equation}
Then this method  has been applied to constructions of  asymptotic expansions both for the solution of
the wave equation on a thin rod structure \cite{P-wave-2015},  for
the solution of non-steady Navier-Stokes equations with uniform boundary conditions in a thin tube structure $B_\varepsilon$ \cite{P-P-Stokes-1-2015,P-P-Stokes-2-2015,Car-1},  for the uniform Dirichlet boundary value problem for the biharmonic
equation in a thin T-like shaped plane domain \cite{GPP-2016} and for other problems \cite{C-C-P-2006,Car-2}. Thus, the method (MPADD) is used in the case of the uniform boundary conditions on the lateral rectilinear surfaces of thin rods (cylinders) and if the right-hand sides depend only on the longitudinal variable in the direction of the corresponding rod and they are constant in some neighbourhoods of the nodes and vertices.

We see that the main difficulty in such problems is the identification of the behaviour of solutions in neighbourhoods of the nodes (aneurysms).
In \cite{N-S-02,N-S-04} the authors made the following assumptions:
\begin{itemize}
  \item
  the first terms of the volume force $f$ and surface load $g$ on the rods satisfy special orthogonality conditions (see $(3.5)_1$ and $(3.6)$  in \cite{N-S-04}) and the second term of the volume force $f$ has an identified form and depends only on the longitudinal variable,
  \item
  similar orthogonality conditions for the right-hand sides on the knots are satisfied (see $(3.41)$) and the second term is a piecewise constant vector-function (see $(3.42)$),
\end{itemize}
to overcome this difficulty and to construct the leading terms of the elastic-field asymptotics
for solutions of the equations of anisotropic elasticity on junctions of thin beams ($2D$ and $3D$ cases).
Due to these assumptions the displacement field at each knot can be approximated by a rigid displacement. As a result, the approximation
does not contain boundary-layer terms, i.e. {\it the asymptotic expansion is not complete a priori} \cite[Remark 3.1]{N-S-04}.

Using the method of two-scale expansions, the complete asymptotic expansion in powers $\varepsilon$  and $\mu$  for a solution of a linear partial deferential equation in the simplest $s$-dimensional rectangular periodic carcass  was constructed in \cite{B-P-book} (here $\varepsilon$ is the period of the carcass and  $(\varepsilon\mu)^{s-1}$ is the area of the cross-section of beams).

\subsection{Novelty and Methods}
A new feature of the present paper in comparison with the papers mentioned above
is construction and justification of the complete asymptotic expansion for the solution to
a nonuniform Neumann boundary-value problem for the Poisson equation  in a thin $3D$ aneurysm-type domain
and the proof both energetic and pointwise uniform estimates for the difference between the solution of the starting problem $(\varepsilon >0)$ and the solution of the corresponding limit problem  $(\varepsilon =0)$ without any orthogonality conditions for the right-hand side in the equation and for the right-hand sides in the Neumann boundary conditions. In addition,  the right-hand sides can depend both on longitudinal and transversal variables and  the thin  cylinders can be curvilinear.

To construct the asymptotic expansion in the whole domain, we use the  method of matching asymptotic expansions (see  \cite{I}) with special cut-off functions. The asymptotic expansion consists of three parts, namely, the regular part of the asymptotics  located inside of each thin cylinder, the boundary-layer part  near the base of each thin cylinder, and the inner part of the asymptotics discovered in a neighborhood of the aneurysm.

The terms of the inner part of the asymptotics are special solutions of boundary-value problems in an unbounded domain with different outlets at infinity. It turns out they have polynomial growth at infinity. Matching these parts, we derive the limit problem  $(\varepsilon =0)$ in the corresponding graph and the recurrence procedure to determine all terms of the asymptotic expansion.

We proved energetic estimates that allow us to identify more precisely the impact  of the aneurysm on some properties of the whole structure. One of the main results obtained in this paper is the energetic estimate (see Corollary~\ref{corollary1})
\begin{equation}\label{our}
\frac{\|u_\varepsilon - U^{(0)}\|_{H^1(\Omega_{\varepsilon})}}{\sqrt{meas (\Omega_{\varepsilon})}} = \mathcal{O}(\varepsilon^{\frac{\alpha}{2}})\quad \text{as} \quad \varepsilon \to 0
\end{equation}
in the Sobolev space $H^1(\Omega_{\varepsilon})$ instead of $L^2$-space in \cite[Chapter 4]{P-book} (see (\ref{P})). Here
 $\alpha$ is a fixed number from the interval $(\frac23 , 1).$
In addition,  pointwise uniform estimates are deduced in the case of rectilinear cylinders.

It should be stressed that the error estimates and convergence rate are very important both for  justification of  adequacy of one- or two-dimensional models that aim at  description of actual three-dimensional thin bodies and for the study of boundary effects and effects of local (internal) inhomogeneities in applied problems. Pointwise estimates are of particular importance for engineering practice, since large values of tearing stresses in a small region at first cause local material damage and then lead to destruction of the  whole construction.

Thus, our approach makes it possible to take into account various factors (e.g. variable thickness of thin curvilinear cylinders,
 inhomogeneous boundary conditions, geometric characteristics of aneurysms, etc.) in statements of boundary-value problems on graphs.
In addition, the transition from two-dimensional to three-dimensional problems, the variable thickness of thin cylinders and their arbitrary number,
special behaviour of the inner terms, and the matching procedure are the major differences and difficulties that were overcome in contrast to our previous paper \cite{Mel_Klev_AA-2016}.

 Also this approach has been applied to nonlinear monotone boundary-value problems with nonlinear boundary conditions in thin aneurysm-type domains to construct the leading terms of the asymptotics and these results were announced at the conference \cite{Kle}. It should be mentioned here that in \cite{Mel_Klev-2013} we studied a boundary-value problem in the union of thin rectangles without any local geometric irregularity. In this case the inner part of the asymptotics is absent and we have to equate simply the regular parts to obtain transmission conditions in the respective limit problem.

\subsection{Structure of the paper}

In section \ref{statement}, we describe the domain $\Omega_\varepsilon$ and the statement of the problem.
The formal asymptotic expansion for the solution to the problem (\ref{probl}) is constructed in  section~\ref{Formal asymptotics}.  In section~\ref{justification}, we justify the asymptotics (Theorem~\ref{mainTheorem}) and prove asymptotic estimates for the leading terms of the
asymptotics (Corollaries~\ref{corollary1}, \ref{corollary2} and \ref{corollary3}). In the Conclusions,  we analyze results obtained in this paper and discuss possible generalizations.

\section{Statement of the problem}\label{statement}

The model thin aneurysm-type domain  $\Omega_\varepsilon$  consists of three thin curvilinear cylinders
$$
\Omega_\varepsilon^{(i)} =
  \Bigg\{
  x=(x_1, x_2, x_3)\in\Bbb{R}^3: \
  \varepsilon \ell<x_i<1, \ \
  \sum\limits_{j=1}^3 (1-\delta_{ij})x_j^2<\varepsilon^2 h_i^2(x_i)
  \Bigg\}, \ i=1,2,3,
$$
that are joined through a domain $\Omega_\varepsilon^{(0)}$ (referred in the sequel "aneurysm").
Here $\varepsilon$ is a small parameter; $\ell\in(0, \frac13);$  the positive functions $\{h_i\}_{i=1}^3$ belong to the space $C^1 ([0, 1])$
and they are equal to some constants in neighborhoods at the points $x=0$ and $x_i=1, \ i=1,2,3$; the symbol $\delta_{ij}$ is the Kroneker delta, i.e.,
$\delta_{ii} = 1$ and $\delta_{ij} = 0$ if $i \neq j.$

The aneurysm $\Omega_\varepsilon^{(0)}$ (see Fig.~\ref{Fig-2})
is formed by the homothetic transformation with coefficient $\varepsilon$ from a bounded domain
$\Xi^{(0)}\in \Bbb R^3$,  i.e.,
$
\Omega_\varepsilon^{(0)} = \varepsilon\, \Xi^{(0)}.
$
In addition, we assume that its boundary contains the disks
$$
\Upsilon_\varepsilon^{(i)} (\varepsilon\ell)=\Bigg\{ x\in\Bbb{R}^3 \, : \ x_i=\varepsilon \ell, \quad \sum\limits_{j=1}^3 (1-\delta_{ij})x_j^2<\varepsilon^2 h_i^2(\varepsilon \ell) \Bigg\}, \quad i=1,2,3,
$$
and denote
$
\Gamma_\varepsilon^{(0)} := \partial\Omega_\varepsilon^{(0)} \backslash \left\{\overline{\Upsilon_\varepsilon^{(1)} (\varepsilon \ell)}
\cup \overline{\Upsilon_\varepsilon^{(2)} (\varepsilon \ell)} \cup \overline{\Upsilon_\varepsilon^{(3)} (\varepsilon \ell)}\right\}.
$

\begin{figure}[htbp]
\begin{center}
\includegraphics[width=7cm]{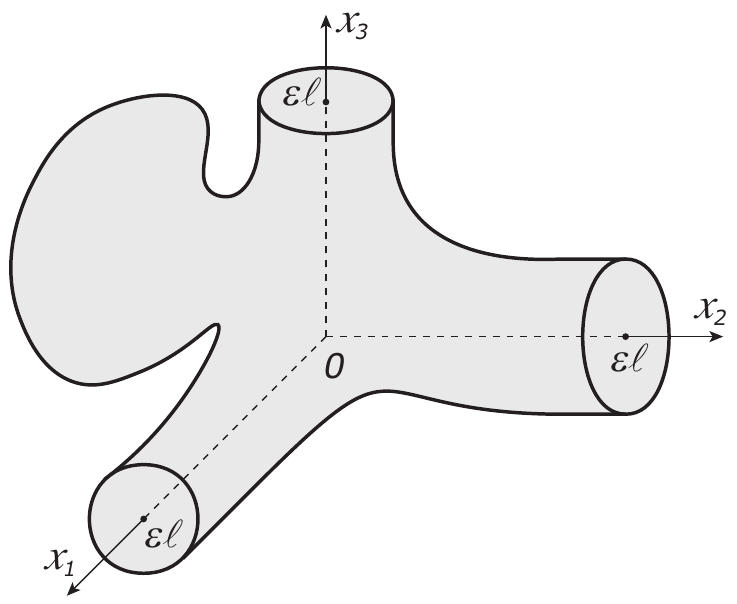}
\caption{The aneurysm $\Omega_\varepsilon^{(0)}$}\label{Fig-2}
\end{center}
\end{figure}

Thus the model thin aneurysm-type domain  $\Omega_\varepsilon$  (see Fig.~3) 
is   the interior of the union
$
\bigcup_{k=0}^{3}\overline{\Omega_\varepsilon^{(k)}}
$
and we assume that it has the Lipschitz boundary.

\begin{remark}
We can consider more general thin aneurysm-type domains with arbitrary orientation of thin cylinders (their number can be also arbitrary).
 But to avoid technical and huge calculations and to demonstrate the main steps of the proposed asymptotic approach
 we consider a such kind of the thin aneurysm-type domain, when the cylinders are placed on the coordinate axes.
\end{remark}

\begin{figure}[htbp]
\begin{center}
\includegraphics[width=7.5cm]{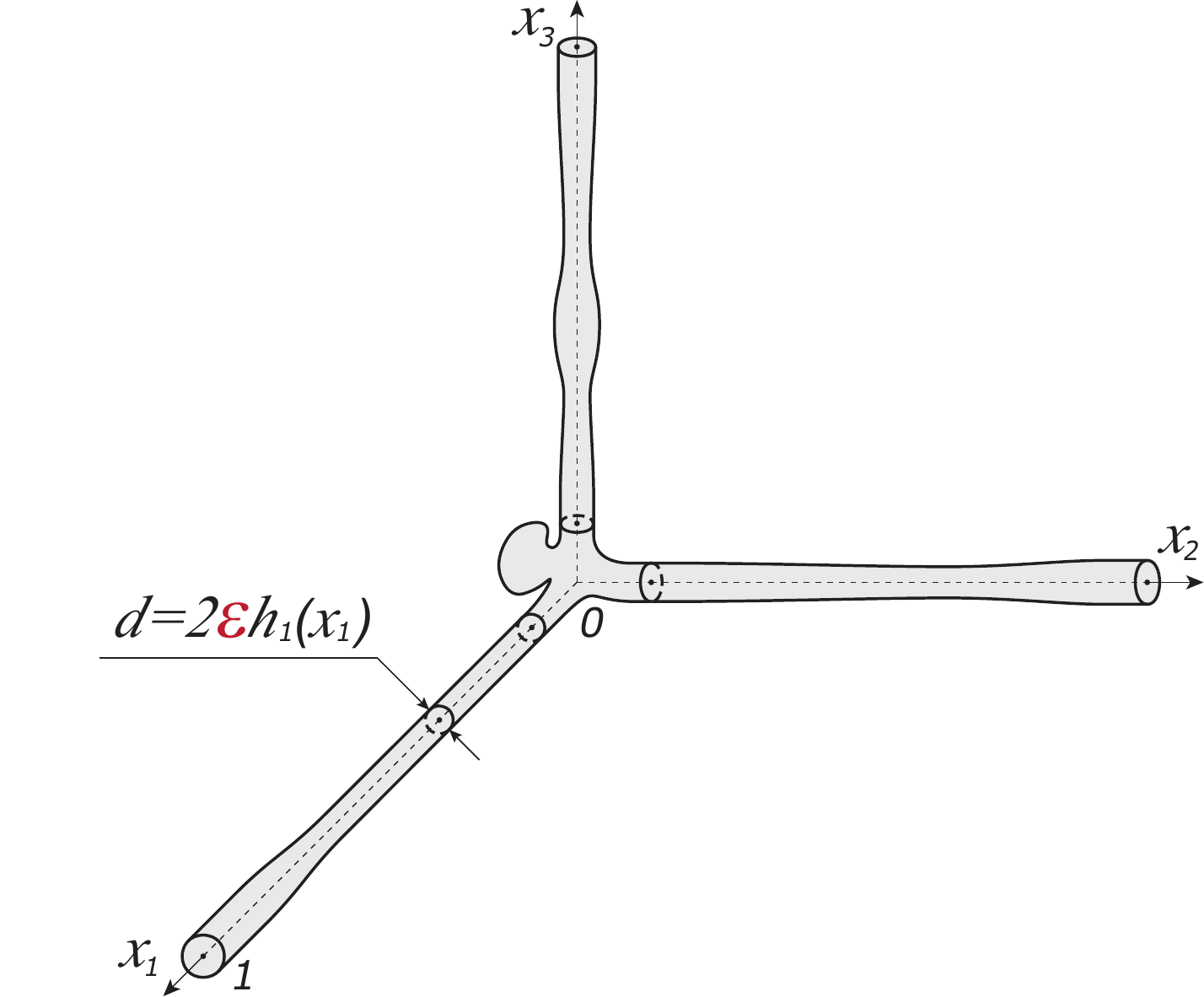}
\caption{The model thin aneurysm-type domain $\Omega_\varepsilon$ }\label{Fig-3}
\end{center}
\end{figure}

In $\Omega_\varepsilon,$ we consider the following mixed boundary-value problem:
\begin{equation}\label{probl}
\left\{\begin{array}{rclll}
  -\Delta{u_\varepsilon}(x) & = & f(x), &
   x\in\Omega_\varepsilon, &
\\
  \partial_{\boldsymbol{\nu}}{u_\varepsilon}(x) & = & 0, &
   x\in{\Gamma_\varepsilon^{(0)}}, &
\\
  -\partial_{\boldsymbol{\nu}}{u_\varepsilon}(x) & = & {\varphi_{\varepsilon}(x)}, &
   x\in{\Gamma_\varepsilon^{(i)}}, & i=1,2,3,
\\
  u_\varepsilon(x) & = & 0, &
   x\in{\Upsilon_{\varepsilon}^{(i)} (1)}, & i=1,2,3,
\end{array}\right.
\end{equation}
where
$
{\Gamma_\varepsilon^{(i)}} = \partial\Omega_\varepsilon^{(i)} \cap \{ x\in\Bbb{R}^3 \ : \ \varepsilon \ell<x_i<1 \},
$
$\partial_{\boldsymbol{\nu}}$ is the outward normal derivative, \\ the given functions $f$ and $\varphi_{\varepsilon}$ are
smooth  and
$$
\varphi_{\varepsilon} (x) = \ \varepsilon \varphi^{(i)} \left(x_i, \dfrac{\overline{x}_i}{\varepsilon} \right), \quad x\in\Gamma_\varepsilon^{(i)}, \quad i=1,2,3,
$$
where
$$
\overline{x}_i =
\left\{\begin{array}{lr}
(x_2, x_3), & i=1, \\
(x_1, x_3), & i=2, \\
(x_1, x_2), & i=3.
\end{array}\right.
$$
It follows from the theory of linear boundary-value problems that, for any fixed value of $\varepsilon,$ the problem~(\ref{probl})
possesses a unique weak solution $u_\varepsilon$ from the Sobolev space $H^1(\Omega_\varepsilon)$ such that its traces on the
ends $\Upsilon_{\varepsilon}^{(i)}(1),  i=1,2,3,$ of the domain $\Omega_\varepsilon$ are equal to zero and the solution satisfies the integral identity
\begin{equation}\label{int-identity}
     \int_{\Omega_\varepsilon} \nabla u_\varepsilon \cdot \nabla \psi \, dx
 =   \int_{\Omega_\varepsilon} f \,\psi \, dx
 -   \sum_{i=1}^3 \int_{\Gamma_\varepsilon^{(i)}} \varphi_\varepsilon \, \psi\, d\sigma_x
\end{equation}
for any function $\psi\in H^1(\Omega_\varepsilon)$ such that $\psi|_{x_i=1}=0, \ i=1,2,3.$

{\sf
The aim of the present paper is to
\begin{itemize}
  \item
  develop a procedure to construct the complete asymptotic expansion for the solution to the problem  (\ref{probl}) as the small parameter $\varepsilon \to 0;$
\item
  justify this procedure and prove the corresponding asymptotic estimates;
\item
   derive the corresponding limit problem $(\varepsilon =0)$;
\item
  prove energetic and uniform pointwise estimates for the difference between the solution of the problem~ (\ref{probl})  and the solution of
   the limit problem,   from which the influence of the aneurysm will be observed.
 \end{itemize}
}

\section{Formal asymptotic expansions}\label{Formal asymptotics}

We propose the following asymptotic ansatzes for the solution to the problem ~(\ref{probl}) :
\begin{enumerate}
  \item[1)]
 the regular part of the asymptotics
  \begin{equation}\label{regul}
\bl{
u_\infty^{(i)} := \omega_0^{(i)} (x_i) + \varepsilon \, \omega_1^{(i)} (x_i)
 +  \sum\limits_{k=2}^{+\infty} \varepsilon^{k}
    \left(
    u_k^{(i)} \left( x_i, \dfrac{\overline{x}_i}{\varepsilon} \right)
 +  \omega_k^{(i)} (x_i)
    \right)
}
\end{equation}
is located inside of each thin cylinder $\Omega^{(i)}_\varepsilon$ (see Fig.~\ref{Fig-4})
and their terms  depend both on the corresponding longitudinal variable $x_i$ and so-called "quick variables" $\dfrac{\overline{x}_i}{\varepsilon} \ (i=1,2,3);$
  \item[2)]
the boundary-layer part of the asymptotics
\begin{equation}\label{prim+}
\gr{\Pi_\infty^{(i)}
 := \sum\limits_{k=0}^{+\infty}\varepsilon^{k}\Pi_k^{(i)}
    \left(\frac{1-x_i}{\varepsilon},\frac{\overline{x}_i}{\varepsilon}\right)} \quad (i=1,2,3)
\end{equation}
is located in a neighborhood of the base $\Upsilon_{\varepsilon}^{(i)} (1)$ of each thin cylinder $\Omega^{(i)}_\varepsilon;$
  \item[3)]
the inner part of the asymptotics
  \begin{equation}\label{junc}
\rd{N_\infty=\sum\limits_{k=0}^{+\infty}\varepsilon^k N_k\left(\frac{x}{\varepsilon}\right)}
\end{equation}
is located in a neighborhood of the aneurysm $\Omega^{(0)}_\varepsilon$.
\end{enumerate}
\begin{figure}[htbp]
\begin{center}
\includegraphics[width=9cm]{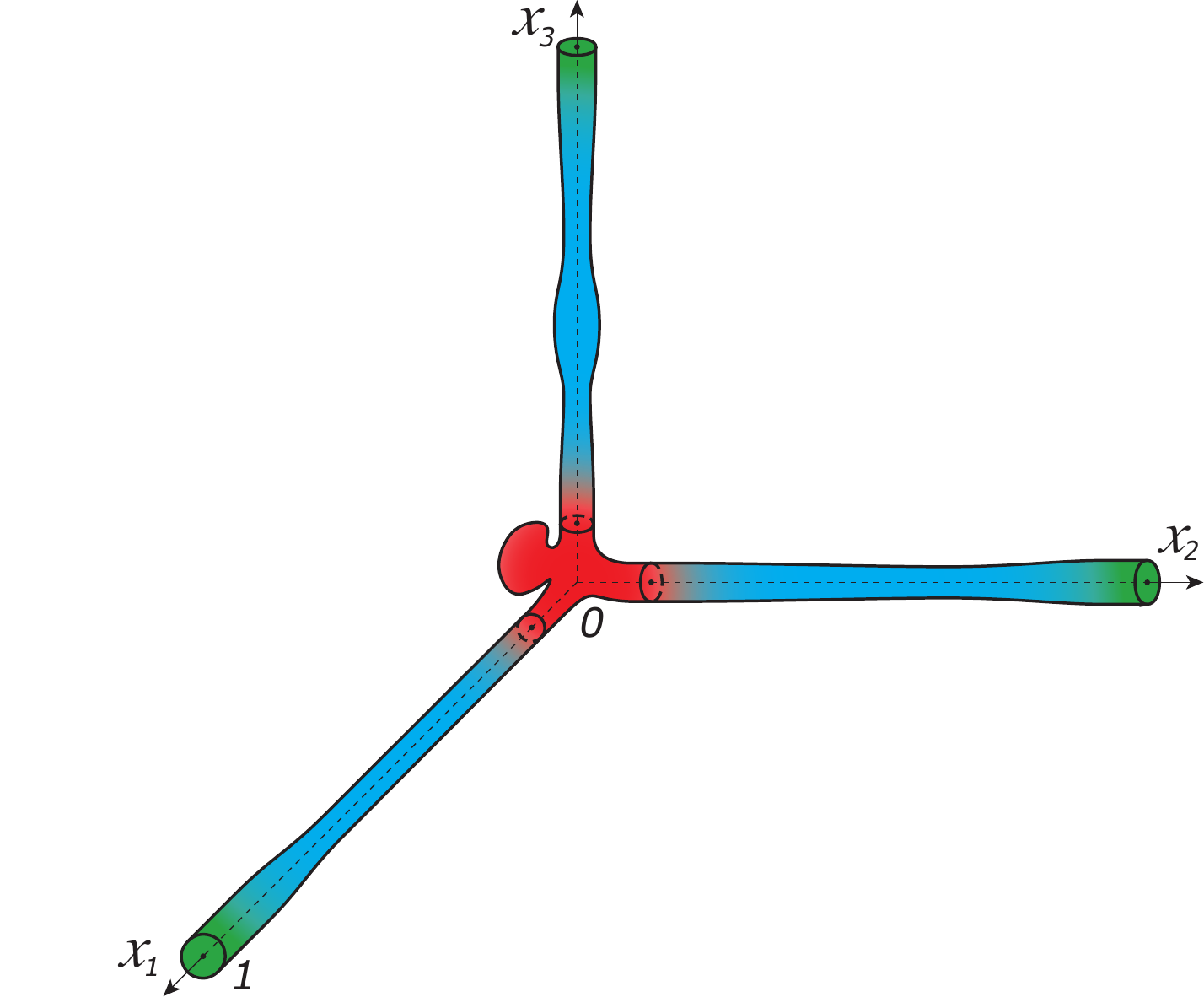}
\caption{Location of different parts of the asymptotics}\label{Fig-4}
\end{center}
\end{figure}

\subsection{Regular part of the asymptotics}
Formally substituting the series (\ref{regul}) into the differential equation  of the problem (\ref{probl}) and expanding function $f$ in the Taylor series at the point $\overline{x}_i=(0,0),$ we obtain
$$
 -  \sum\limits_{k=2}^{+\infty} \varepsilon^{k}
    \frac{d^2{u}_{k}^{(i)}}{d{x_i}^2} (x_i,\overline{\xi}_i)
 -  \sum\limits_{k=0}^{+\infty} \varepsilon^{k}
    \Delta_{\overline{\xi}_i}{u}_{k+2}^{(i)} (x_i,\overline{\xi}_i)
 -  \sum\limits_{k=0}^{+\infty} \varepsilon^{k}
    \frac{d^2\omega_k^{(i)}}{d{x_i}^2} (x_i)
  \approx
    \sum\limits_{k=0}^{+\infty} \varepsilon^{k} f_k^{(i)}(x_i,\overline{\xi}_i),
$$
where $\xi_i = \dfrac{{x}_i}{\varepsilon}, \ \overline{\xi}_i = \dfrac{\overline{x}_i}{\varepsilon}$ and
\begin{equation}\label{f_k}
f_k^{(i)}(x_i,\overline{\xi}_i)
 :=  \dfrac{1}{k!}
    \left(
    \sum\limits_{j=1}^3 (1-\delta_{ij}) \, \xi_j \, \frac{\partial }{\partial{x_j}}
    \right)^k f(x)|_{\overline{x}_i=(0,0)},
\quad k\in \Bbb N_0;
\end{equation}
e.g. at $i=1$ the last symbol means as follows $f_0^{(1)}(x_1, \xi_2, \xi_3)= f(x_1,0,0)$ and
\begin{equation*}
f_k^{(1)}(x_1, \xi_2, \xi_3)
 =  \dfrac{1}{k!}
    \left(
    \sum\limits_{n=0}^k \left(
                          \begin{array}{c}
                            k \\
                            n \\
                          \end{array}
                        \right)
     \xi_2^n \, \frac{\partial^n f}{\partial{x_2^n}}\big(x_1,0,0\big) \ \xi_3^{k-n} \, \frac{\partial^{k-n} f}{\partial{x_3^{k-n}}}\big(x_1,0,0\big)
    \right),
\quad k\in \Bbb N.
\end{equation*}

Then, taking into account the view of  the outer normal to $\Gamma^{(i)}_\varepsilon$
\begin{eqnarray*}
{\boldsymbol{\nu}}^{(i)} (x_i, \ \overline{\xi}_i) &=& \dfrac{1}{\sqrt{1 + \varepsilon^2 |h_i^\prime (x_i)|^2 \,}}
    \big( -\varepsilon h_i^\prime (x_i), \ \overline{\nu}_i (\overline{\xi}_i) \big)
\\
&=&
\left\{\begin{array}{lr}
    \dfrac{ \big(
   -\varepsilon h_1^\prime (x_1), \
    {\nu}_2^{(1)} (\overline{\xi}_1), \
    {\nu}_3^{(1)} (\overline{\xi}_1)
    \big) }{\sqrt{1 + \varepsilon^2 |h_1^\prime (x_1)|^2 \,}},
    \ & i=1,
\\
    \dfrac{ \big(
    {\nu}_1^{(2)} (\overline{\xi}_2), \
   -\varepsilon h_2^\prime (x_2), \
    {\nu}_3^{(2)} (\overline{\xi}_2)
    \big) }{\sqrt{1 + \varepsilon^2 |h_2^\prime (x_2)|^2 \,}},
    \ & i=2,
\\
    \dfrac{ \big(
    {\nu}_1^{(3)} (\overline{\xi}_3), \
    {\nu}_2^{(3)} (\overline{\xi}_3), \
   -\varepsilon h_3^\prime (x_3)
    \big) }{\sqrt{1 + \varepsilon^2 |h_3^\prime (x_3)|^2 \,}},
    \ & i=3,
\end{array}\right.
\end{eqnarray*}
where $\overline{\nu}_i (\frac{\overline{x}_i}{\varepsilon})$ is the outward normal for the disk ${\Upsilon_{\varepsilon}^{(i)} (x_i)}$
for each value of $x_i, \ i=1,2,3,$ we put the series (\ref{regul}) into  the third  relation of the problem (\ref{probl}) and derive
$$
    h_i^\prime(x_i)
    \sum\limits_{k=3}^{+\infty}\varepsilon^{k}
    \frac{d{u}_{k-1}^{(i)}}{dx_i} (x_i,\overline{\xi}_i)
 -  \sum\limits_{k=1}^{+\infty}\varepsilon^{k}
    \partial_{\overline{\nu}_i (\overline{\xi}_i)}{u}_{k+1}^{(i)} (x_i,\overline{\xi}_i)
 +  h_i^\prime(x_i)
    \sum\limits_{k=1}^{+\infty}\varepsilon^{k}
    \frac{d\omega_{k-1}^{(i)}}{d{x_i}} (x_i)
$$
$$
 \approx
    \varepsilon \sqrt{1+\varepsilon^2 |h_i^\prime(x_i)|^2 \ }
 \cdot
    \varphi^{(i)}(x_i,\overline{\xi}_i)
 =  \sum\limits_{k=0}^{+\infty} \varepsilon^{2k+1}
    \dfrac{(-1)^k (2k)!}{(1-2k)(k!)^2 4^k}
    |h_i^\prime(x_i)|^{2k} \, \varphi^{(i)}(x_i,\overline{\xi}_i).
$$

Equating the coefficients of the same powers of $\varepsilon$, we deduce recurrent relations of the boundary-value problems
for the  determination of the expansion coefficients in (\ref{regul}). Let us consider the problem for  $u_2^{(i)}:$
\begin{equation}\label{regul_probl_2}
\left\{\begin{array}{rcll}
-\Delta_{\overline{\xi}_i}{u}_{2}^{(i)} (x_i,\overline{\xi}_i)
 & = &
\dfrac{d^{\,2}\omega_0^{(i)}}{d{x_i}^2} (x_i) + f_0^{(i)} (x_i,\overline{\xi}_i),
 & \ \ \overline{\xi}_i\in\Upsilon_i (x_i),
\\[2mm]
-\partial_{\nu_{\overline{\xi}_i}}{u}_{2}^{(i)}(x_i,\overline{\xi}_i)
 & = &
- \ h_i^\prime (x_i)\dfrac{d\omega_0^{(i)}}{d{x_i}} (x_i) + \varphi^{(i)} (x_i,\overline{\xi}_i),
 & \ \ \overline{\xi}_i\in\partial\Upsilon_i(x_i),
\\[2mm]
\langle u_2^{(i)} (x_i,\cdot) \rangle_{\Upsilon_i (x_i)}
 & = &
0.
 &
\end{array}\right.
\end{equation}
Here the variable $x_i$ is regarded as a parameter from the interval $ \ I_\varepsilon^{(i)} =\{x: \ x_i\in  (\varepsilon \ell, 1), \ \overline{x_i}=(0,0)\},$
$\Upsilon_i(x_i)=\{ \overline{\xi}_i\in\Bbb{R}^2 \ : \ |\overline{\xi}_i|< h_i(x_i) \}, $ \
$\langle u(x_i,\cdot) \rangle_{\Upsilon_i(x_i)} :=  \int_{\Upsilon_i(x_i)}u (x_i,\overline{\xi}_i)d{\overline{\xi}_i},$ $\ i=1,2,3.$

For each value of $i\in \{1, 2, 3\},$ the problem (\ref{regul_probl_2}) is the inhomogeneous Neumann problem for the Poisson equation in the disk $\Upsilon_i(x_i)$ with respect to the variable ${\overline{\xi}_i}\in\Upsilon_i(x_i).$  Writing down
the necessary and sufficient conditions for the solvability of problem (\ref{regul_probl_2}), we get the following differential equation for the function $\omega_0^{(i)}:$
\begin{equation}\label{omega_probl_2}
 -  \pi \dfrac{d}{d{x_i}}\left(h_i^2(x_i)\frac{d\omega_0^{(i)}}{d{x_i}}(x_i)\right)
 =  \int\limits_{\Upsilon_i(x_i)}f_0^{(i)}(x_i,\overline{\xi}_i) \, d{\overline{\xi}_i}
 -  \int\limits_{\partial\Upsilon_i(x_i)}
    \varphi^{(i)}(x_i,\overline{\xi}_i) \, dl_{\overline{\xi}_i},
\quad x_i\in I_\varepsilon^{(i)}.
\end{equation}
Let $\omega_0^{(i)}$ be a solution of the differential equation (\ref{omega_probl_2}) (boundary conditions for this differential equation will be determined later). Then the solution of problem (\ref{regul_probl_2}) exist and the third relation in (\ref{regul_probl_2}) supplies the uniqueness of solution.

For determination of the coefficients $u_3^{(i)}, \  i=1,2,3,$ we obtain the following problems:
\begin{equation}\label{regul_probl_3}
\left\{\begin{array}{rcll}
-\Delta_{\overline{\xi}_i}{u}_{3}^{(i)}(x_i,\overline{\xi}_i)
 & = &
\dfrac{d^{\,2}\omega_1^{(i)}}{d{x_i}^2}(x_i) + f_1^{(i)}(x_i,\overline{\xi}_i),
 &\quad
\overline{\xi}_i\in\Upsilon_i(x_i),
 \\[2mm]
-\partial_{\nu_{\overline{\xi}_i}}{u}_{3}^{(i)}(x_i,\overline{\xi}_i)
 & = &
- \ h_i^\prime(x_i)\dfrac{d\omega_1^{(i)}}{d{x_i}}(x_i),
 &\quad
\overline{\xi}_i\in\partial\Upsilon_i(x_i),
 \\[2mm]
\langle u_3^{(i)}(x_i,\cdot) \rangle_{\Upsilon_i(x_i)}
 & = &
0.
 &
\end{array}\right.
\end{equation}
Repeating the previous reasoning, we find
\begin{equation}\label{omega_probl_3}
 -  \pi \dfrac{d}{d{x_i}}\left(h_i^2(x_i)\frac{d\omega_1^{(i)}}{d{x_i}}(x_i)\right)
 =  \int\limits_{\Upsilon_i(x_i)}f_1^{(i)}(x_i,\overline{\xi}_i) \, d{\overline{\xi}_i},
\quad x_i\in I_\varepsilon^{(i)}, \quad i=1,2,3.
\end{equation}

Let us consider boundary-value problems for the functions  $u_k^{(i)}, \ k\geq 4, \ i=1,2,3 :$
\begin{equation}\label{regul_probl_k}
\left\{\begin{array}{rcll}
-\Delta_{\overline{\xi}_i}{u}_{k}^{(i)}(x_i,\overline{\xi}_i)
 & = &
\dfrac{d^{\,2}\omega_{k-2}^{(i)}}{d{x_i}^2}(x_i) + \dfrac{\partial^{\,2}u_{k-2}^{(i)}}{\partial{x_i}^2}(x_i,\overline{\xi}_i) + f_{k-2}^{(i)}(x_i,\overline{\xi}_i),
 &
\overline{\xi}_i\in\Upsilon_i(x_i),
 \\[2mm]
-\partial_{\nu_{\overline{\xi}_i}}{u}_{k}^{(i)}(x_i,\overline{\xi}_i)
 & = &
- \ h_i^\prime(x_i) \left(\dfrac{d\omega_{k-2}^{(i)}}{d{x_i}}(x_i) + \dfrac{\partial{u}_{k-2}^{(i)}}{\partial{x_i}}(x_i,\overline{\xi}_i)\right)
 &
 \\[2mm]
 &   &
+ \ \eta_{k-2}^{(i)}(x_i,\overline{\xi}_i) \, \varphi^{(i)}(x_i,\overline{\xi}_i),
 & \overline{\xi}_i\in\partial\Upsilon_i(x_i),
 \\[2mm]
\langle u_k^{(i)}(x_i,\cdot) \rangle_{\Upsilon_i(x_i)}
 & = &
0.
 &
\end{array}\right.
\end{equation}
Assume that all coefficients $u_2^{(i)},\dots,u_{k-1}^{(i)},\omega_0^{(i)},\dots,\omega_{k-3}^{(i)}$ of the expansion (\ref{regul}) are determined.
Then we can find  $u_k^{(i)}$ and $\omega_{k-2}^{(i)}$ from problem (\ref{regul_probl_k}). Indeed, it follows from the solvability condition of problem (\ref{regul_probl_k})  that $\omega_{k-2}^{(i)}$ must be a solution to the following ordinary differential equation:
$$
 -  \pi \dfrac{d}{d{x_i}}\left(h_i^2(x_i)\dfrac{d\omega_{k-2}^{(i)}}{d{x_i}}(x_i)\right)
 =  \int\limits_{\Upsilon_i(x_i)}f_{k-2}^{(i)}(x_i,\overline{\xi}_i) \, d{\overline{\xi}_i}
 -  \eta_{k-2}^{(i)}(x_i)
    \int\limits_{\partial\Upsilon_i(x_i)}
    \varphi^{(i)}(x_i,\overline{\xi}_i) \, dl_{\overline{\xi}_i}
$$
\begin{equation}\label{omega_probl_k}
 +  h_i^\prime(x_i)
    \int\limits_{\partial\Upsilon_i(x_i)}
    \dfrac{\partial{u}_{k-2}^{(i)}}{\partial{x_i}}(x_i,\overline{\xi}_i) \,
    dl_{\overline{\xi}_i},
    \quad x_i\in I_\varepsilon^{(i)}, \quad i=1,2,3, \quad k\geq4,
\end{equation}
where
\begin{equation}\label{eta_k}
\eta_k^{(i)} (x_i) =
\left\{\begin{array}{ll}
0, & \mbox{if $k$ is odd},
\\
\dfrac{(-1)^\frac{k}2 k! |h_i^\prime(x_i)|^k}{(1-k)((\frac{k}2)!)^2 4^\frac{k}2}, &
\mbox{if $k$ is even},
\end{array}\right.
\quad x_i\in I_\varepsilon^{(i)}, \quad k\in\Bbb N.
\end{equation}
\begin{remark}
Boundary conditions for the differential equations (\ref{omega_probl_2}), (\ref{omega_probl_3}) and (\ref{omega_probl_k}) are unknown in advance.
They will be determined in the process of construction of the asymptotics.
\end{remark}

Thus, the solution of problem (\ref{regul_probl_k}) is uniquely determined. Hence, the recursive procedure for the determination
of the coefficients of series (\ref{regul}) is uniquely solvable.

 \subsection{Boundary-layer part of the asymptotics}

In the previous subsection, we have considered the regular asymptotics taking into account the inhomogeneity of the right-hand side of the differential
equation in (\ref{probl}) and the boundary conditions on the lateral surfaces the thin cylinders $\Omega^{(i)}_\varepsilon, \ i=1, 2, 3.$ In what follows,
we construct the boundary-layer part of the asymptotics compensating the residuals of the regular one
at the base of $\Omega_\varepsilon^{(i)}.$

Substituting the series (\ref{prim+}) into (\ref{probl}) and collecting coefficients with the same powers of $\varepsilon$, we get the following mixed boundary-value problems:
\begin{equation}\label{prim+probl}
 \left\{\begin{array}{rcll}
  -\Delta_{\xi_i^*, \overline{\xi}_i} \Pi_k^{(i)}(\xi_i^*,\overline{\xi}_i) & =
   & 0,
   & \xi_i^*\in(0,+\infty), \quad \overline{\xi}_i\in\Upsilon_i(1),
   \\[2mm]
  -\partial_{\nu_{\overline{\xi}_i}} \Pi_k^{(i)}(\xi_i^*,\overline{\xi}_i) & =
   & 0,
   & \xi_i^*\in(0,+\infty), \quad \overline{\xi}_i\in\partial\Upsilon_i(1),
   \\[2mm]
  \Pi_k^{(i)}(0,\overline{\xi}_i) & =
   & \Phi_k^{(i)}(\overline{\xi}_i),
   & \overline{\xi}_i\in\Upsilon_i(1),
   \\[2mm]
  \Pi_k^{(i)}(\xi_i^*,\overline{\xi}_i) & \to
   & 0,
   & \xi_i^*\to+\infty, \quad \overline{\xi}_i\in\Upsilon_i(1),
 \end{array}\right.
\end{equation}\\[3mm]
where $\xi_i^* = \frac{1-x_i}{\varepsilon},$ $\overline{\xi}_i = \frac{\overline{x}_i}{\varepsilon},$
$$
\Phi_k^{(i)} = -\omega_{k}^{(i)}(1), \ k=0,1; \qquad
\Phi_k^{(i)}(\overline{\xi}_i)
 = -u_k^{(i)}(1,\overline{\xi}_i)
 -  \omega_{k}^{(i)}(1),
\quad k\geq 2, \quad k\in \Bbb N.
$$

Using the method of separation of variables, we determine the solution
\begin{equation}\label{view_solution}
\Pi_k^{(i)}(\xi_i^*,\overline{\xi}_i)
 =  a_{k,0}^{(i)}
 +  \sum\limits_{p=1}^{+\infty}a_{k,p}^{(i)} \,
    \Theta_p^{(i)}(\overline{\xi}_i) \,
    \exp({-\lambda_p^{(i)}\xi_i^*})
\end{equation}
of problem (\ref{prim+probl}) at a fixed index $k,$
where
$$
a_{k,p}^{(i)}
 =  \dfrac{1}{\|\Theta_p^{(i)}\|_{L^2(\Upsilon_i(1))}^2}
    \int\limits_{\Upsilon_i(1)}
    \Phi_k^{(i)}(\overline{\xi}_i)
    \Theta_p^{(i)}(\overline{\xi}_i) \, d\overline{\xi}_i,
$$
$$
a_{k,0}^{(i)}
 =  \dfrac{1}{|\Upsilon_i(1)|}
    \int\limits_{\Upsilon_i(1)}
    \Phi_k^{(i)}(\overline{\xi}_i) \, d\overline{\xi}_i
 = -\dfrac{1}{\pi h_i^2(1)}
    \int\limits_{\Upsilon_i(1)}
    u_k^{(i)} (1,\overline{\xi}_i) \, d\overline{\xi}_i
 -  \omega_{k}^{(i)}(1)
 = -\omega_{k}^{(i)}(1).
$$
Here $\Theta_0^{(i)}\equiv1, \ \Theta_p^{(i)}(\overline{\xi}_i)$ and $\{0=\lambda_0^{(i)}<\lambda_1^{(i)} \le \lambda_2^{(i)}\le\ldots\le\lambda_p^{(i)}\le\ldots\}$ are the eigenfunctions and eigenvalues of the spectral problem
\begin{equation}\label{spectral_problem}
\left\{\begin{array}{lcll}
- \Delta_{\overline{\xi}_i} \Theta^{(i)}  & = &  (\lambda^{(i)})^2\Theta^{(i)} &
\mbox{in} \ \Upsilon_i(1),
\\
\partial_{\nu_{\overline{\xi}_i}} \Theta^{(i)} & = & 0 &
\mbox{on} \ \partial\Upsilon_i(1).
\end{array}\right.
\end{equation}

It follows from the fourth condition in (\ref{prim+probl})  that coefficient $a_{k,0}^{(i)}$
must be equal to $0.$ As a result,
we arrive at  the following boundary conditions for the functions $\{\omega_{k}^{(i)}\}$:
\begin{equation}\label{bv_left}
\omega_{k}^{(i)}(1)=0, \quad k\in\Bbb{N}_0, \quad i=1,2,3.
\end{equation}

\begin{remark}
Since $\Phi_0^{(i)}\equiv\Phi_1^{(i)}\equiv0$, we conclude that $\Pi_{0}^{(i)}\equiv\Pi_{1}^{(i)}\equiv0, \quad i=1,2,3.$
Moreover, from representation (\ref{view_solution}) it follows the following asymptotic relations
\begin{equation}\label{as_estimates}
\Pi_k^{(i)}(\xi_i^*,\overline{\xi}_i)
 =  {\cal O}(\exp(- \lambda_1^{(i)}\xi_i^*))
\quad \mbox{as} \quad \xi_i^*\to+\infty, \quad i=1,2,3.
\end{equation}
\end{remark}


 \subsection{Inner part of the asymptotics}\label{inner_asymp}
 To obtain conditions for the functions $\{\omega_k^{(i)}\}, \ i=1,2,3$ at the point $0,$ we introduce the inner part~ (\ref{junc}) of the asymptotics in a neighborhood of the aneurysm $\Omega^{(0)}_\varepsilon$. For this we pass to the variables $\xi=\frac{x}{\varepsilon}.$ Then forwarding the parameter $\varepsilon$ to $0,$ we see  that the domain $\Omega_\varepsilon$ is transformed into the unbounded domain $\Xi$ that  is the union of the domain~$\Xi^{(0)}$ and three semibounded cylinders
$$
\Xi^{(i)}
 =  \{ \xi=(\xi_1,\xi_2,\xi_3)\in\Bbb R^3 \ :
    \quad  \ell<\xi_i<+\infty,
    \quad |\overline{\xi}_i|<h_i(0) \},
\qquad i=1,2,3,
$$
i.e., $\Xi$ is the interior of $\bigcup_{i=0}^3\overline{\Xi^{(i)}}$ (see Fig.~\ref{Fig-5}).

\begin{figure}[htbp]
\begin{center}
\includegraphics[width=9cm]{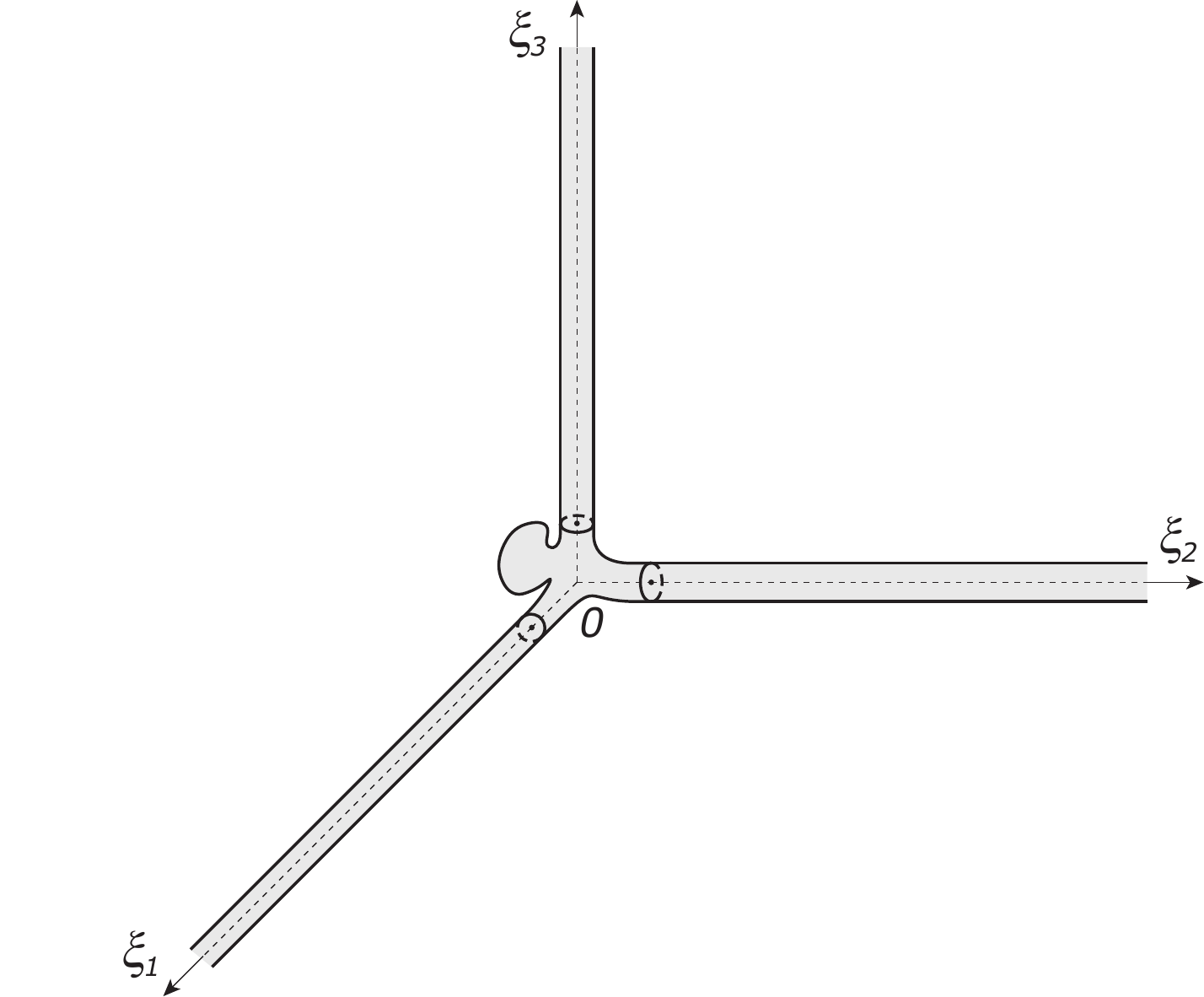}
\vskip - 10pt
\caption{Domain $\Xi$}\label{Fig-5}
\end{center}
\end{figure}

Let us introduce the following notation for parts of the boundary of  $\Xi$:
\begin{itemize}
  \item
 $\Gamma_i = \{ \xi\in\Bbb R^3 \ : \quad \ell<\xi_i<+\infty, \quad |\overline{\xi}_i|=h_i(0)\}, \quad i=1,2,3,$
  \item
 $\Gamma_0 = \partial\Xi \backslash \left(\bigcup_{i=1}^3 \Gamma_i \right).$
\end{itemize}

Substituting the series (\ref{junc}) into the problem (\ref{probl}) and equating coefficients at the same powers of $\varepsilon$, we derive the following relations for $\{N_k\}:$
\begin{equation}\label{junc_probl_n}
 \left\{\begin{array}{rcll}
  -\Delta_{\xi}{N_k}(\xi) & = &
   F_k(\xi),                  &
   \quad \xi\in\Xi,
\\[2mm]
   \partial_{{\boldsymbol \nu}_\xi}{N_k}(\xi) & = &
   0,                               &
   \quad \xi\in\Gamma_0,
\\[2mm]
  -\partial_{\,\overline{\nu}_{\overline{\xi}_i}}{N_k}(\xi) & = &
   B_{k}^{(i)}(\xi),                               &
   \quad \xi\in\Gamma_i, \quad i=1,2,3,
\\[2mm]
   N_k(\xi)                                                               & \sim &
   \omega^{(i)}_{k}(0) + \Psi^{(i)}_{k}(\xi),                                    &
   \quad \xi_i \to +\infty, \ \ \overline{\xi}_i \in \Upsilon_i(0), \quad i=1,2,3.
 \end{array}\right.
\end{equation}
Here
$$
F_0\equiv F_1\equiv 0, \quad
F_k(\xi) = \dfrac{(\xi, \nabla_x)^{k-2}f(0)}{(k-2)!}:=\frac{1}{(k-2)!} \Bigg(\sum_{i=1}^3 \xi_i \, \frac{\partial f(x)}{\partial x_i} \Bigg)^{k-2}\Big|_{x=0},  \quad
\xi\in\Xi,
$$
$$
B_{0}^{(i)} \equiv B_{1}^{(i)} \equiv 0, \qquad
B_{k}^{(i)}(\xi)
 =   \dfrac{\xi_i^{k-2}}{(k-2)!}
     \dfrac{\partial^{k-2}\varphi^{(i)}}{\partial{x}_i^{k-2}} (0,\overline{\xi}_i),
     \quad \xi\in\Gamma_i, \ \ i=1,2,3.
$$
The right hand sides in the differential equation and boundary conditions on $\{\Gamma_i\}$ of the problem~ (\ref{junc_probl_n}) are obtained with the help of the Taylor decomposition of the functions $f$ and $\varphi^{(i)}$ at the points $x=0$ and $x_i=0, \ i=1,2,3,$ respectively.

The fourth condition in (\ref{junc_probl_n}) appears by matching the regular and inner asymptotics in a neighborhood of the aneurysm, namely the asymptotics of
the terms $\{N_k\}$ as $\xi_i \to +\infty$ have to coincide with the corresponding asymptotics of  terms of the regular expansions (\ref{regul}) as $x_i =\varepsilon \xi_i \to +0, \ i=1,2,3,$ respectively.
Expanding each term of the regular asymptotics in the Taylor series at the points $x_i=0, \ i=1,2,3,$
and collecting the coefficients of the same powers of $\varepsilon,$  we get
\begin{equation}\label{Psi_k}
\begin{array}{c}
\Psi_{0}^{(i)} \equiv 0, \qquad
\Psi_{1}^{(i)}(\xi) = \xi_i\dfrac{d\omega_{0}^{(i)}}{dx}(0),  \quad  i=1,2,3,
\\[3mm]
\Psi_{k}^{(i)}(\xi)
 =   \sum\limits_{j=1}^{k} \dfrac{\xi_i^j}{j!}
     \dfrac{d^j\omega_{k-j}^{(i)}}{dx_i^j} (0)
 +   \sum\limits_{j=0}^{k-2}
     \dfrac{\xi_i^j}{j!}
     \dfrac{\partial^j{u_{k-j}^{(i)}}}{\partial x_i^j} (0, \overline{\xi}_i),
\quad  i=1,2,3, \ \ k \geq 2.
\end{array}
\end{equation}

A solution of the problem (\ref{junc_probl_n}) at $k\in \Bbb N$ is sought in the form
\begin{equation}\label{new-solution}
N_k(\xi) = \sum\limits_{i=1}^3 \Psi_{k}^{(i)}(\xi)\chi_i(\xi_i) + \widetilde{N}_k(\xi),
\end{equation}
where $ \chi_i \in C^{\infty}(\Bbb{R}_+),\ 0\leq \chi_i \leq1$ and
$$
\chi_i(\xi_i) =
\left\{\begin{array}{ll}
 0, & \text{if} \quad \xi_i \leq 1+\ell,
\\[2mm]
 1, & \text{if} \quad \xi_i \geq 2+\ell,
\end{array}\right. \qquad i=1,2,3.
$$
Then $\widetilde{N}_k$ has to be a  solution of the problem
\begin{equation}\label{junc_probl_general}
 \left\{\begin{array}{rcll}
  -\Delta_{\xi}{\widetilde{N}_k}(\xi) & = &
   \widetilde{F}_k(\xi),                  &
   \quad \xi\in\Xi,
\\[2mm]
   \partial_{\boldsymbol{\nu}_\xi}{\widetilde{N}_k}(\xi) & = &
   0,                                           &
   \quad \xi\in\Gamma_0,
\\[2mm]
  -\partial_{\boldsymbol{\nu}_{\overline{\xi}_i}}{\widetilde{N}_k}(\xi) & = &
   \widetilde{B}_{k}^{(i)}(\xi),                               &
   \quad \xi\in\Gamma_i, \quad i=1,2,3,
 \end{array}\right.
\end{equation}
and has to satisfy the following conditions:
\begin{equation}\label{junc_probl_general+cond}
   \widetilde{N}_k(\xi)  \rightarrow  \omega^{(i)}_{k}(0)
   \quad \text{as} \quad \xi_i \to +\infty, \ \ \overline{\xi}_i \in \Upsilon_i(0), \quad i=1,2,3,
\end{equation}
where
$$
\widetilde{F}_1(\xi)
 =  \sum\limits_{i=1}^3
    \Big(
    \xi_i\dfrac{d\omega_0^{(i)}}{dx_i}(0) \chi_i^{\prime\prime}(\xi_i)
 + 2\dfrac{d\omega_0^{(i)}}{dx_i}(0) \chi_i^{\prime}(\xi_i)
    \Big),
$$
$$
\widetilde{F}_k(\xi)
 =  \sum\limits_{i=1}^3
    \Bigg[
    \Big(
    \sum\limits_{j=1}^{k}
    \dfrac{\xi_i^{j}}{j!}
    \dfrac{d^j\omega_{k-j}^{(i)}}{dx_i^j}(0)
 +  \sum\limits_{j=0}^{k-2}
    \dfrac{\xi_i^{j}}{j!}
    \dfrac{\partial^j{u_{k-j}^{(i)}}}{\partial x_i^j}(0, \overline{\xi}_i)
    \Big)
    \chi_i^{\prime\prime}(\xi_i)
$$
$$
 + 2\Big(
    \sum\limits_{j=1}^{k}
    \dfrac{\xi_i^{j-1}}{(j-1)!}
    \dfrac{d^j\omega_{k-j}^{(i)}}{dx_i^j}(0)
 +  \sum\limits_{j=1}^{k-2}
    \dfrac{\xi_i^{j-1}}{(j-1)!}
    \dfrac{\partial^j{u_{k-j}^{(i)}}}{\partial x_i^j}(0, \overline{\xi}_i)
    \Big)
    \chi_i^{\prime}(\xi_i)
    \Bigg]
$$
$$
 +  \Big( 1 - \sum\limits_{i=1}^3 \chi_i(\xi_i) \Big)
    \dfrac{1}{(k-2)!}\left(\xi, \nabla_x\right)^{k-2} f(0)
$$
and
$$
 \widetilde{B}_{1}^{(i)} \equiv 0, \qquad
\widetilde{B}_{k}^{(i)} (\xi)
 =  \dfrac{\xi_i^{k-2}}{(k-2)!}
    \dfrac{\partial^{k-2}\varphi^{(i)}}{\partial{x}_i^{k-2}} (0,\overline{\xi}_i)
    \big( 1-\chi_i(\xi_i) \big),
\quad  i=1,2,3, \qquad k \geq 2.
$$

The existence of a solution to the  problem (\ref{junc_probl_general}) in the corresponding energetic space can be obtained from general results about the
asymptotic behavior of solutions to elliptic problems in domains with different exits to infinity \cite{Ko-Ol,La-Pa,Na-Pla,Naz96}.
We will use approach proposed in \cite{Naz96, ZAA99}.

 Let $C^{\infty}_{0,\xi}(\overline{\Xi})$ be a space of functions infinitely differentiable in $\overline{\Xi}$ and finite with
respect to  $\xi$, i.e.,
$$
\forall \,v\in C^{\infty}_{0,\xi}(\overline{\Xi}) \quad \exists \,R>0 \quad \forall \, \xi\in\overline{\Xi} \quad \xi_i \geq R, \ \ i=1,2,3 \, : \quad v(\xi)=0.
$$
We now define a  space  $\mathcal{H} := \overline{\left( C^{\infty}_{0,\xi}(\overline{\Xi}), \ \| \cdot \|_\mathcal{H} \right)}$, where
$$
\| v \|_\mathcal{H}
 =  \sqrt{\int_\Xi|\nabla v(\xi)|^2 \, d\xi + \int_\Xi |v(\xi)|^2 |\rho(\xi)|^2 \, d\xi \, } ,
$$
and the weight function  $ \rho \in C^{\infty}(\Bbb{R}^3),\ 0\leq \rho \leq1$ and
$$
\rho (\xi) =
\left\{\begin{array}{ll}
1,            & \mbox{if} \quad                   \xi \in \Xi^{(0)}, \\
|\xi_i|^{-1}, & \mbox{if} \quad \xi_i \geq \ell+1, \ \xi \in \Xi^{(i)}, \quad i=1,2,3.
\end{array}\right.
$$

\begin{definition}
A function $\widetilde{N}_k$ from the space $\mathcal{H}$ is called a weak solution of the problem (\ref{junc_probl_general}) if the identity
\begin{equation}\label{integr}
    \int\limits_{\Xi} \nabla \widetilde{N}_k \cdot \nabla v \, d\xi
 =  \int\limits_{\Xi} \widetilde{F}_k \, v \, d\xi
 -  \sum\limits_{i=1}^3 \int\limits_{\Gamma_i}
    \widetilde{B}^{(i)}_k \, v \, d\sigma_\xi.
\end{equation}
holds for all $v\in\mathcal{H}$.
\end{definition}

\begin{proposition}\label{tverd1}
   Let $\rho^{-1} \widetilde{F}_k\in L^2(\Xi),$
       $\rho^{-1} \widetilde{B}^{(i)}_{k} \in L^2(\Gamma_i), \quad i=1,2,3.$

Then there exist a weak solution of problem (\ref{junc_probl_general}) if and only if
\begin{equation}\label{solvability}
    \int\limits_{\Xi} \widetilde{F}_k \, d\xi
 =  \sum\limits_{i=1}^3 \, \int\limits_{\Gamma_i}
    \widetilde{B}^{(i)}_k \, d\sigma_\xi.
\end{equation}
This solution is defined up to an additive constant.
The additive constant  can be chosen to guarantee  the existence and uniqueness of a weak solution of problem (\ref{junc_probl_general}) with
the following differentiable asymptotics:
\begin{equation}\label{inner_asympt_general}
\widehat{N}_k(\xi)=\left\{
\begin{array}{rl}
    {\cal O}(\exp( - \gamma_1 \xi_1)) & \mbox{as} \ \ \xi_1\to+\infty,
\\[2mm]
    \delta_k^{(2)}
 +  {\cal O}(\exp( - \gamma_2 \xi_2)) & \mbox{as} \ \ \xi_2\to+\infty,
\\[2mm]
    \delta_k^{(3)}
 +  {\cal O}(\exp( - \gamma_3 \xi_3)) & \mbox{as} \ \ \xi_3\to+\infty,
\end{array}
\right.
\end{equation}
where $\gamma_i, \ i=1,2,3 $ are positive constants.
\end{proposition}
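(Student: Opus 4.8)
The plan is to regard (\ref{junc_probl_general}) as a Neumann problem for the Laplacian on the domain $\Xi$ with three cylindrical outlets to infinity and to produce its weak solution by the Riesz/Lax--Milgram theorem on the Hilbert space $\mathcal H$, after quotienting out the kernel. First I would observe that the symmetric form $a(u,v):=\int_\Xi\nabla u\cdot\nabla v\,d\xi$ is bounded on $\mathcal H$ but degenerate, its kernel being the constants; the constant function does lie in $\mathcal H$, since $\int_{\Xi^{(i)}}\rho^2\,d\xi\sim\int_\ell^\infty\xi_i^{-2}\,d\xi_i<\infty$ and $1$ is approximated in $\|\cdot\|_{\mathcal H}$ by cut-offs $\chi_R$ with $\int_\Xi|\nabla\chi_R|^2\,d\xi=\mathcal O(1/R)$. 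The right-hand side of (\ref{integr}),
$$
\mathcal L(v):=\int_\Xi\widetilde F_k\,v\,d\xi-\sum_{i=1}^3\int_{\Gamma_i}\widetilde B^{(i)}_k\,v\,d\sigma_\xi,
$$
is a bounded functional on $\mathcal H$: the volume term is dominated by $\|\rho^{-1}\widetilde F_k\|_{L^2(\Xi)}\,\|\rho v\|_{L^2(\Xi)}\le\|\rho^{-1}\widetilde F_k\|_{L^2(\Xi)}\,\|v\|_{\mathcal H}$, and the surface terms similarly, using the hypothesis $\rho^{-1}\widetilde B^{(i)}_k\in L^2(\Gamma_i)$ together with a weighted trace inequality $\|\rho v\|_{L^2(\Gamma_i)}\le C\|v\|_{\mathcal H}$. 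Taking $v\equiv1$ in (\ref{integr}) makes the left-hand side vanish and forces $\mathcal L(1)=0$, which is precisely (\ref{solvability}); this yields the necessity of the solvability condition.

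The analytic heart is a weighted Poincaré (Hardy) inequality making $a$ coercive modulo constants, namely $\|\rho v\|_{L^2(\Xi)}\le C\|\nabla v\|_{L^2(\Xi)}$ for all $v\in\mathcal H$ with $\int_{\Xi^{(0)}}v\,d\xi=0$. I would prove it outlet by outlet: in each $\Xi^{(i)}$ split $v=\overline v(\xi_i)+v^{\perp}$ into the cross-sectional mean over $\Upsilon_i(0)$ and the orthogonal remainder. The Poincaré inequality on the disk bounds $\int|v^{\perp}|^2$, hence its weighted norm, by $\int|\nabla_{\overline\xi_i}v^{\perp}|^2$; for the one-dimensional mean the classical Hardy inequality $\int_\ell^\infty\xi_i^{-2}|\overline v|^2\,d\xi_i\le C\int_\ell^\infty|\overline v'|^2\,d\xi_i+C|\overline v(\ell)|^2$ controls the weighted $L^2$-norm by the longitudinal derivative and a trace on the junction interface. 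Splicing these with an ordinary Poincaré estimate on the bounded core $\Xi^{(0)}$ gives the inequality; alternatively it is contained in the weighted-space theory of \cite{Na-Pla,Naz96,ZAA99}. With coercivity in hand, when (\ref{solvability}) holds $\mathcal L$ descends to a bounded functional on $\mathcal H/\mathbb{R}$, and the coercive symmetric form $a$ furnishes, via Lax--Milgram, a unique element there; this is the sought weak solution, determined up to an additive constant.

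It remains to extract the differentiable asymptotics (\ref{inner_asympt_general}), which is the main obstacle. The key simplification is that the cut-offs $\chi_i$ render $\widetilde F_k$ and $\widetilde B^{(i)}_k$ compactly supported in the $\xi_i$-direction, so for large $\xi_i$ the solution solves the homogeneous Neumann problem in the half-cylinder. Separation of variables in the eigenbasis of the cross-sectional problem (\ref{spectral_problem}) then gives
$$
\widetilde N_k(\xi)=c_0^{(i)}+c_1^{(i)}\xi_i+\sum_{p\ge1}\bigl(a_p\,e^{\lambda_p^{(i)}\xi_i}+b_p\,e^{-\lambda_p^{(i)}\xi_i}\bigr)\Theta_p^{(i)}(\overline\xi_i),
$$
and finiteness of the $\mathcal H$-norm annihilates every growing mode: the linear term $c_1^{(i)}\xi_i$ (whose weighted norm $\int\xi_i^{-2}\cdot\xi_i^{2}\,d\xi_i$ diverges) and each $e^{\lambda_p^{(i)}\xi_i}$. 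This leaves $\widetilde N_k=c_0^{(i)}+\mathcal O(e^{-\lambda_1^{(i)}\xi_i})$ in each exit, and elliptic regularity makes the expansion differentiable. Fixing the free additive constant so that $c_0^{(1)}=0$ (denote the resulting solution $\widehat N_k$), setting $\delta_k^{(2)}:=c_0^{(2)}$, $\delta_k^{(3)}:=c_0^{(3)}$ and $\gamma_i:=\lambda_1^{(i)}$, I obtain (\ref{inner_asympt_general}). The delicate point throughout is this passage from the energy solution to pointwise differentiable asymptotics with exponentially small remainders, for which I would lean on the theory of elliptic problems in domains with cylindrical outlets \cite{Ko-Ol,La-Pa,Na-Pla,Naz96,ZAA99}.
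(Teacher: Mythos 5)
Your proof is correct, but note that the paper does not actually prove Proposition \ref{tverd1}: the authors invoke general results on elliptic problems in domains with different outlets to infinity \cite{Ko-Ol,La-Pa,Na-Pla,Naz96} and state that they follow the approach of \cite{Naz96,ZAA99}. What you have written is, in effect, a self-contained instantiation of that cited theory for the specific domain $\Xi$, and the three ingredients you supply are exactly the ones hidden behind the citation: (i) the observation that constants belong to $\mathcal{H}$ --- because the weight $\rho=|\xi_i|^{-1}$ is square-integrable along each thin outlet and cut-offs converge in the $\mathcal{H}$-norm --- which is what legitimizes testing (\ref{integr}) with $v\equiv 1$ and makes (\ref{solvability}) a genuine necessary condition rather than a vacuous one; (ii) the Hardy-type weighted Poincar\'e inequality giving coercivity of the Dirichlet form on $\mathcal{H}/\mathbb{R}$, so that Riesz/Lax--Milgram yields existence and uniqueness up to an additive constant precisely when (\ref{solvability}) holds; and (iii) the eigenfunction expansion in each outlet --- available because the cut-offs $\chi_i$ make $\widetilde{F}_k$ and $\widetilde{B}_k^{(i)}$ compactly supported --- where membership in $\mathcal{H}$ kills the linear mode and the growing exponentials, forcing stabilization to constants at rate $e^{-\lambda_1^{(i)}\xi_i}$, after which normalizing the constant in the first outlet to zero produces $\widehat{N}_k$ with (\ref{inner_asympt_general}) and $\gamma_i=\lambda_1^{(i)}$. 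The trade-off between the two routes: the paper's citation is shorter and the general theory also delivers facts used immediately afterwards (the Green-formula representation (\ref{const_d_0}) of $\delta_k^{(i)}$ in Remark \ref{remark_constant}, and the classification of polynomially growing solutions underlying Proposition \ref{tverd2}), whereas your argument makes the proposition self-contained and shows transparently where the thin-cylinder geometry enters. The two steps you assert rather than prove --- the weighted trace inequality $\|\rho v\|_{L^2(\Gamma_i)}\le C\|v\|_{\mathcal{H}}$ and the elliptic regularity needed to justify and differentiate the expansions term by term --- are standard (the first follows by summing unit-segment trace estimates along the cylinder, since $\rho$ is comparable to a constant on each segment), so leaning on them does not create a gap.
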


The constants $\delta_k^{(2)}$ and $\delta_k^{(3)}$ in (\ref{inner_asympt_general})
are defined as follows:
\begin{equation}\label{const_d_0}
\delta_k^{(i)}
 =      \int_{\Xi} \mathfrak{N}_i \, \widetilde{F}_k(\xi) \, d\xi
 + \sum\limits_{j=1}^3 \,
    \int_{\Gamma_j} \mathfrak{N}_i \, \widetilde{B}_k^{(j)}(\xi) \, d\sigma_\xi ,
\quad i=2,3, \ \  k\in\Bbb{N}_0,
\end{equation}
where $\mathfrak{N}_2$ and $\mathfrak{N}_3$ are special solutions to
the corresponding homogeneous problem
\begin{equation}\label{hom_probl}
  -\Delta_{\xi}\mathfrak{N} = 0 \ \ \text{in} \ \ \Xi, \qquad
  \partial_\nu \mathfrak{N} = 0 \ \ \text{on} \ \ \partial \Xi,
\end{equation}
for the problem (\ref{junc_probl_general}).

\begin{proposition}\label{tverd2}
The  problem (\ref{hom_probl}) has two linearly independent solutions $\mathfrak{N}_2$ and $\mathfrak{N}_3$ that do not belong to the space
$ {\cal H}$ and they have the following differentiable asymptotics:
\begin{equation}\label{inner_asympt_hom_solution_1}
\mathfrak{N}_2(\xi) = \left\{
\begin{array}{rl}
     - \dfrac{\xi_1}{\pi h_1^2(0)}
 +  {\cal O}(\exp( - \gamma_1 \xi_1)) & \mbox{as} \ \ \xi_1\to+\infty,
 \\[3mm]
    C_2^{(2)} + \dfrac{\xi_2}{\pi h_2^2(0)}
 +  {\cal O}(\exp( - \gamma_2 \xi_2)) & \mbox{as} \ \ \xi_2\to+\infty,
 \\[3mm]
    C_2^{(3)} + {\cal O}(\exp( - \gamma_3 \xi_3)) & \mbox{as} \ \ \xi_3\to+\infty,
\end{array}
\right.
\end{equation}
\begin{equation}\label{inner_asympt_hom_solution_2}
\mathfrak{N}_3(\xi) = \left\{
\begin{array}{rl}
     - \dfrac{\xi_1}{\pi h_1^2(0)}
 +     {\cal O}(\exp( - \gamma_1 \xi_1)) & \mbox{as} \ \ \xi_1\to+\infty,
 \\[3mm]
   C_3^{(2)} +  {\cal O}(\exp( - \gamma_2 \xi_2)) & \mbox{as} \ \ \xi_2\to+\infty,
 \\[3mm]
    C_3^{(3)} + \dfrac{\xi_3}{\pi h_3^2(0)}
 +   {\cal O}(\exp( - \gamma_3 \xi_3)) & \mbox{as} \ \ \xi_3\to+\infty,
\end{array}
\right.
\end{equation}

Any other solution to the homogeneous problem, which has polynomial growth at infinity, can be presented as a linear combination
$ \alpha_1 + \alpha_2 \mathfrak{N}_2 + \alpha_3 \mathfrak{N}_3.$
\end{proposition}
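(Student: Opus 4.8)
\emph{Overall plan.} The plan is to exploit the exactly cylindrical structure of the three outlets, reduce the construction of $\mathfrak{N}_2$ and $\mathfrak{N}_3$ to the solvability result already established in Proposition~\ref{tverd1}, and then pin down the dimension of the solution space by a Green's-formula (symplectic) argument. First I would record the behaviour of any polynomially growing solution in each outlet. Since each $h_i$ is constant near the origin, every $\Xi^{(i)}$ is a genuine circular cylinder of radius $h_i(0)$, so I would separate variables using the eigenpairs $\{\lambda_p^{(i)},\Theta_p^{(i)}\}$ of the cross-sectional Neumann Laplacian (\ref{spectral_problem}) on $\Upsilon_i(0)$. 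Writing a harmonic function with zero lateral Neumann data as $\sum_{p\ge 0} g_p(\xi_i)\,\Theta_p^{(i)}(\overline{\xi}_i)$ forces $g_0''=0$ and $g_p''=(\lambda_p^{(i)})^2 g_p$ for $p\ge 1$; discarding the exponentially growing modes, a solution with polynomial growth must behave like $a_i\xi_i+b_i+\mathcal{O}(\exp(-\gamma_i\xi_i))$ with $\gamma_i\in(0,\lambda_1^{(i)}]$. Because the zero eigenvalue of the disk Neumann problem is simple, no growth faster than linear occurs; thus ``polynomial growth'' means ``at most linear growth'', and the map $\Phi\colon\mathfrak{N}\mapsto(a_1,a_2,a_3,b_1,b_2,b_3)\in\mathbb{R}^6$ is well defined on the space $\mathcal{X}$ of all such solutions. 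Interior elliptic regularity together with the spectral gap upgrades these expansions to the differentiable asymptotics claimed in (\ref{inner_asympt_hom_solution_1})--(\ref{inner_asympt_hom_solution_2}).

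\emph{Existence.} To build $\mathfrak{N}_2$ I would fix a smooth $G_2$ equal to $-\xi_1/(\pi h_1^2(0))$ for large $\xi_1$, to $\xi_2/(\pi h_2^2(0))$ for large $\xi_2$, and to $0$ for large $\xi_3$, and set $\mathfrak{N}_2=G_2+\widetilde{v}$. Then $\widetilde{v}$ solves a problem of the form (\ref{junc_probl_general}) whose data $-\Delta G_2$ and $-\partial_\nu G_2$ are compactly supported, so that $\rho^{-1}$ times them lies in $L^2$. The compatibility condition (\ref{solvability}) is here exactly the flux balance $\sum_i a_i\,\pi h_i^2(0)=0$, and the chosen slopes $\bigl(-1/(\pi h_1^2(0)),\,1/(\pi h_2^2(0)),\,0\bigr)$ satisfy it; hence Proposition~\ref{tverd1} furnishes $\widetilde{v}\in\mathcal{H}$ with the asymptotics (\ref{inner_asympt_general}). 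Adding back $G_2$ produces $\mathfrak{N}_2$ with precisely the behaviour (\ref{inner_asympt_hom_solution_1}), the constants $C_2^{(2)},C_2^{(3)}$ being those furnished by Proposition~\ref{tverd1} (cf.~(\ref{const_d_0})); the function $\mathfrak{N}_3$ is obtained in the same way with the second and third outlets interchanged. Since both have nonzero linear growth, neither lies in $\mathcal{H}$.

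\emph{Independence and completeness.} The map $\Phi$ is injective, for if all $a_i,b_i$ vanish then the solution decays exponentially in every outlet, hence lies in $\mathcal{H}$, is harmonic with zero Neumann data, and so is a constant, therefore $0$. The slope vectors of $\mathfrak{N}_2$ and $\mathfrak{N}_3$ are linearly independent and span the kernel of the flux functional, so together with the constant $1$ (slopes $0$) they are linearly independent, giving $\dim\mathcal{X}\ge 3$. For the reverse bound I would apply Green's formula to two solutions on $\Xi\cap\{\xi_i<R\}$ and let $R\to+\infty$: the lateral terms vanish by the Neumann condition and the cross-sectional caps yield $\sum_i \pi h_i^2(0)\,(a_i b_i'-a_i' b_i)=0$. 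Thus $\Phi(\mathcal{X})$ is isotropic for the nondegenerate symplectic form $q\bigl((a,b),(a',b')\bigr)=\sum_i \pi h_i^2(0)(a_i b_i'-a_i' b_i)$ on $\mathbb{R}^6$, whence $\dim\mathcal{X}=\dim\Phi(\mathcal{X})\le 3$. Therefore $\dim\mathcal{X}=3$, the triple $\{1,\mathfrak{N}_2,\mathfrak{N}_3\}$ is a basis, and every polynomially growing solution equals $\alpha_1+\alpha_2\mathfrak{N}_2+\alpha_3\mathfrak{N}_3$.

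\emph{Main obstacle.} The separation of variables and the symplectic dimension count are routine; the genuine difficulty is the existence step, namely solving the reduced inhomogeneous problem in the weighted space $\mathcal{H}$ and matching the flux balance with the compatibility condition (\ref{solvability}). This is exactly the content of Proposition~\ref{tverd1} and of the theory of elliptic problems in domains with cylindrical exits to infinity \cite{Ko-Ol,Na-Pla,Naz96,ZAA99}, on which I would rely.
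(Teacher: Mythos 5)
Your proposal is correct, and its existence step is essentially the paper's own: the authors also seek $\mathfrak{N}_2$ as the cut-off linear profile $-\tfrac{\xi_1}{\pi h_1^2(0)}\,\chi_1(\xi_1)+\tfrac{\xi_2}{\pi h_2^2(0)}\,\chi_2(\xi_2)$ plus a corrector $\widetilde{\mathfrak{N}}_2\in\mathcal{H}$, observe that the resulting data for problem (\ref{junc_probl_general}) are compactly supported with $\widetilde{B}^*=0$, check the solvability condition (\ref{solvability}) (your flux balance $\sum_i a_i\,\pi h_i^2(0)=0$), and invoke Proposition~\ref{tverd1} to get the corrector with exponentially stabilizing asymptotics; your $G_2$ is exactly this profile, so the two constructions coincide. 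Where you genuinely go beyond the paper is the last claim. The paper disposes of linear independence and completeness with the word ``Obviously,'' while you actually prove it: the separation-of-variables normal form $a_i\xi_i+b_i+\mathcal{O}(e^{-\gamma_i\xi_i})$ in each outlet (which shows that ``polynomial growth'' forces at most linear growth, a point the paper never addresses), the injectivity of the trace map $\Phi$ (exponential decay in all outlets puts the solution in $\mathcal{H}$, whence it is constant and then zero), and the Green's-formula computation showing that $\Phi(\mathcal{X})$ is isotropic for the nondegenerate skew form $\sum_i\pi h_i^2(0)(a_ib_i'-a_i'b_i)$ on $\mathbb{R}^6$, giving $\dim\mathcal{X}\le 3$ to match the lower bound from $\{1,\mathfrak{N}_2,\mathfrak{N}_3\}$. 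This dimension count is the standard argument from the theory of elliptic problems in domains with cylindrical outlets, which the paper implicitly delegates to the references \cite{Ko-Ol,Na-Pla,Naz96}; your write-up makes that part self-contained, at the cost of a longer proof, and is in this respect more complete than the paper's own.
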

\begin{proof}
The solution $\mathfrak{N}_2$ is sought in the form of a sum
$$
\mathfrak{N}_2 (\xi) = - \dfrac{\xi_1}{\pi h_1^2(0)} \, \chi_1(\xi_1) + \dfrac{\xi_2}{\pi h_2^2(0)} \, \chi_2(\xi_2)  +  \widetilde{\mathfrak{N}}_2(\xi),
$$
where $\widetilde{\mathfrak{N}}_2 \in {\cal H}$ and $\widetilde{\mathfrak{N}}_2$ is the solution to the problem (\ref{junc_probl_general}) with right-hand sides
$$
\widetilde{F}_2^*(\xi) =
\left\{
\begin{array}{rl}
    \dfrac{1}{\pi h_1^2(0)}\left(\big(\xi_1 \, \chi_1^{\prime}(\xi_1)\big)' + \chi_1^{\prime}(\xi_1)\right),
  & \ \ \xi \in  \Xi^{(1)},
 \\[3mm]
  -\dfrac{1}{\pi h_2^2(0)}\left(\big(\xi_2 \, \chi_2^{\prime}(\xi_2)\big)' + \chi_2^{\prime}(\xi_2)\right),
  &  \ \ \xi \in  \Xi^{(2)},
 \\[2mm]
  0 \, ,
  & \ \ \xi \in \Xi^{(0)} \cup \Xi^{(3)},
\end{array}
\right.
$$
and $\widetilde{B}_2^*=0.$
It is easy to verify that the solvability condition (\ref{solvability}) is satisfied.
Thus, by virtue of Proposition 2.1 there exist a unique solution $\widetilde{\mathfrak{N}}_2 \in {\cal H}$  that has the asymptotics
$$
\widetilde{\mathfrak{N}}_2(\xi)
 =  (1- \delta_{1j}) C_2^{(j)}
 +  {\cal O}(\exp( - \gamma_j \xi_j))
\quad \mbox{as} \ \ \xi_j\to+\infty, \qquad j=1,2,3.
$$

Similar we can prove the existence of the solution $\mathfrak{N}_3$ with the asymptotics (\ref{inner_asympt_hom_solution_2}).

Obviously, that  $\mathfrak{N}_2$ and $\mathfrak{N}_3$ are linearly independent and
any other solution to the homogeneous problem, which has polynomial growth at infinity, can be presented as
$ \alpha_1 + \alpha_2 \mathfrak{N}_2 + \alpha_3 \mathfrak{N}_3.$
\end{proof}

\begin{remark}\label{remark_constant}
To obtain formulas (\ref{const_d_0}) for the constants $\delta_k^{(2)}$ and $\delta_k^{(3)},$  it is necessary to substitute
the functions $\widehat{N}_k, \mathfrak{N}_2$ and $\widehat{N}_k, \mathfrak{N}_3$ in  the second Green-Ostrogradsky formula
$$
\int_{\Xi_R} \big(\widehat{N} \, \Delta_\xi \mathfrak{N} - \mathfrak{N} \, \Delta_\xi \widehat{N}\big)\, d\xi = \int_{\partial \Xi_R}
\big(\widehat{N} \,\partial_{\nu_\xi} \mathfrak{N} - \mathfrak{N}\, \partial_{\nu_\xi}\widehat{N}\big)\, d\sigma_\xi
$$
respectively, and then pass to the limit as $R \to +\infty.$ Here $\Xi_R = \Xi \cap \{\xi : \ |\xi_i| < R, \ i=1, 2, 3\}.$
\end{remark}

\subsubsection{Limit problem and problems for $\{\omega_k\}$}
The problem (\ref{junc_probl_n})
at $k=0$ is as follows:
\begin{equation}\label{junc_probl_general-k=0}
 \left\{\begin{array}{rcll}
  -\Delta_{\xi}{{N}_0}(\xi) & = &
   0,                  &
   \quad \xi\in\Xi,
\\[2mm]
   \partial_{\boldsymbol{\nu}_\xi}{{N}_0}(\xi) & = &
   0,                                           &
   \quad \xi\in\Gamma_0,
\\[2mm]
  -\partial_{\boldsymbol{\nu}_{\overline{\xi}_i}}{{N}_0}(\xi) & = &
   0,                               &
   \quad \xi\in\Gamma_i, \quad i=1,2,3,
\\[2mm]
   {N}_0(\xi)                                                   & \sim &
   \omega^{(i)}_{0}(0),                                                          &
   \quad \xi_i \to +\infty, \ \ \overline{\xi}_i \in \Upsilon_i(0), \quad i=1,2,3,
 \end{array}\right.
\end{equation}
It is ease to verify that $\delta_0^{(2)}=\delta_0^{(3)}=0$ and $\widehat{N}_0 \equiv 0.$
Thus, this problem has a solution in $\mathcal{H}$ if and only if
\begin{equation}\label{trans0}
\omega_0^{(1)} (0) = \omega_0^{(2)} (0) = \omega_0^{(3)} (0);
\end{equation}
in this case  \ $N_0 \equiv  \widetilde{N}_0 \equiv \omega_0^{(1)} (0).$

In the problem (\ref{junc_probl_general}) at $k=1,$ we have $\widetilde{B}_{1}^{(i)} \equiv 0, \ i=1,2,3,$ and
$$
\widetilde{F}_1(\xi)
 =  \sum\limits_{i=1}^3
    \Big(
    \xi_i\dfrac{d\omega_0^{(i)}}{dx_i}(0) \chi_i^{\prime\prime}(\xi_i)
 + 2\dfrac{d\omega_0^{(i)}}{dx_i}(0) \chi_i^{\prime}(\xi_i)
    \Big).
$$
The solvability condition  (\ref{solvability}) reads in this case as follows:
\begin{equation}\label{transmisiont1}
 \pi h_1^2 (0) \frac{d\omega_{0}^{(1)}}{dx_1} (0) + \pi h_2^2 (0) \frac{d\omega_{0}^{(2)}}{dx_2} (0) + \pi h_3^2 (0) \frac{d\omega_{0}^{(3)}}{dx_3} (0) =  0.
\end{equation}

Thus for the functions $\{\omega_0^{(i)}\}_{i=1}^3$ that are the first  terms of the regular asymptotic expansion~(\ref{regul}), we obtain the following problem:
\begin{equation}\label{main}
 \left\{\begin{array}{rclr}
  - \pi \dfrac{d}{d{x_i}}\left(h_i^2(x_i)\dfrac{d\omega_0^{(i)}}{d{x_i}}(x_i)\right) & = &
    \widehat{F}_0^{(i)}(x_i), &                                x_i\in I_i, \quad i=1,2,3,
 \\[4mm]
    \omega_0^{(i)}(1) & = &
    0, &           i=1,2,3,
 \\[3mm]
    \omega_0^{(1)} (0) \ \, = \ \, \omega_0^{(2)} (0) & = &
    \omega_0^{(3)} (0), &
  \\[3mm]
    \sum\limits_{i=1}^3 \pi h_i^2 (0) \dfrac{d\omega_{0}^{(i)}}{dx_i} (0) & = &
    0, &
 \end{array}\right.
\end{equation}
where $I_i:=\{x: \ x_i\in (0,1), \ \overline{x_i}=(0,0)\}$ and
\begin{equation}\label{right-hand-side}
\widehat{F}_0^{(i)}(x_i)
 := \pi h_i^2(x_i) \, f(x) \, \big|_{\overline{x}_i=(0,0)}
 -  \int\limits_{\partial\Upsilon_i(x_i)}
    \varphi^{(i)} (x_i,\overline{\xi}_i) \, dl_{\overline{\xi}_i},
\quad \ x\in I_i.
\end{equation}
The problem (\ref{main}) is  called {\it limit problem} for problem (\ref{probl}).

Let us verify the solvability condition  (\ref{solvability}) for the problem (\ref{junc_probl_general}) at any fixed $k \in \Bbb N, \ k \ge 2$.
Taking into account  the  third relation in problems (\ref{regul_probl_2}), (\ref{regul_probl_3}) and (\ref{regul_probl_k}), the equality  (\ref{solvability}) can be re-written as follows:
$$
    \sum\limits_{i=1}^3
    \Bigg[ \,
    \pi h_i^2 (0) \, \int\limits_{\ell+1}^{\ell+2}
    \sum\limits_{j=1}^{k}
    \dfrac{\xi_i^{j-1}}{(j-1)!}
    \dfrac{d^j\omega_{k-j}^{(i)}}{dx_i^j}(0)
    \chi_i^{\prime}(\xi_i) \, d\xi_i
$$
$$
 +  \frac{1}{(k-2)!}\int\limits_{\ell}^{\ell+2} (1-\chi_i(\xi_i))
    \int\limits_{\Upsilon_i(0)} \left(\xi, \nabla_x\right)^{k-2}f(0) \, d\overline{\xi}_i  \, d\xi_i
 $$
$$
 -  \int\limits_{\ell}^{\ell+2} (1-\chi_i(\xi_i))
    \int\limits_{\partial\Upsilon_i (0)}
    \dfrac{\partial^{k-2}\varphi^{(i)}}{\partial{x}_i^{k-2}} (0,\overline{\xi}_i) \,
     dl_{\overline{\xi}_i}  \, d\xi_i
    \Bigg]
$$
$$
 +  \frac{1}{(k-2)!} \int\limits_{\Xi^{(0)}} \left( \xi, \nabla_x \right)^{k-2}f(0) \, d\xi \
 =  \ 0, \quad k\in\Bbb N, \ \ k \geq2.
$$
Whence, integrating by parts in the first integrals with regard to (\ref{omega_probl_2}), (\ref{omega_probl_3}) and (\ref{omega_probl_k}), we obtain the following relations for $\{\omega_k^{(i)}\}:$
\begin{equation}\label{transmisiont1}
    \sum\limits_{i=1}^3 \pi h_i^2 (0) \frac{d\omega_{k-1}^{(i)}}{dx_i} (0)
 =  d_{k-1}^*,
\end{equation}
where
\begin{multline}\label{const_d_*}
d_k^*
 =  \sum\limits_{i=1}^3
    \left(
    \sum\limits_{j=1}^k \frac{\ell^j}{j!}
    \int\limits_{\Upsilon_i(0)}
    \dfrac{\partial^{j-1}f_{k-j}^{(i)}}{\partial x_i^{j-1}}
    (0, \overline{\xi}_i) \ d\overline{\xi}_i
 -  \frac{\ell^k}{k!}
    \int\limits_{\partial\Upsilon_i(0)}
    \dfrac{\partial^{k-1}\varphi^{(i)}}{\partial{x}_i^{k-1}}
    (0, \overline{\xi}_i) \ dl_{\overline{\xi}_i}
    \right)
\\
 - \frac{1}{(k-1)!} \int\limits_{\Xi^{(0)}} \left( \xi, \nabla_x \right)^{k-1}f(0) \, d\xi,
\quad k\in\Bbb N,
\end{multline}
and $f_{k}^{(i)}$ is defined in (\ref{f_k}). Recall that $d_0^*=0.$

Hence, if the functions $\{\omega_{k-1}^{(i)}\}_{i=1}^3$ satisfy (\ref{transmisiont1}), then there exist a weak solution $\widetilde{N}_k$ of the problem~(\ref{junc_probl_general}). According to Proposition  \ref{tverd1}, it can be chosen in a unique way to guarantee the
asymptotics~(\ref{inner_asympt_general}).

However till now, we do not take into account the condition (\ref{junc_probl_general+cond}).
To satisfy this condition, we represent a weak solution of the problem (\ref{junc_probl_general}) in the following form:
$$
\widetilde{N}_k = \omega_k^{(1)}(0) + \widehat{N}_k.
$$
Taking into account the asymptotics (\ref{inner_asympt_general}), we have to put
\begin{equation}\label{trans1}
\omega_k^{(1)}(0) =  \omega_k^{(2)}(0) - \delta_k^{(2)} = \omega_k^{(3)}(0) - \delta_k^{(3)},  \quad k \in \Bbb N.
\end{equation}

As a result, we get the solution of the problem (\ref{junc_probl_n}) with the following asymptotics:
\begin{equation}\label{inner_asympt}
{N}_{k}(\xi)
 =  \omega_{k}^{(i)} (0) + \Psi_k^{(i)} (\xi)
 +  {\cal O}(\exp(-\gamma_i\xi_i))
\quad \mbox{as} \ \ \xi_i\to+\infty, \quad i=1,2,3.
\end{equation}

Let us denote by
$$
G_k(\xi)
 := \omega_{k}^{(i)}(0) + \Psi_k^{(i)}(\xi), \quad \xi\in\Xi^{(i)},
\quad i=1,2,3, \quad k\in \Bbb N.
$$
\begin{remark}\label{rem_exp-decrease}
Due to (\ref{inner_asympt}),
the functions $\{{N}_{k} - G_k\}_{k\in \Bbb N}$ are exponentially decrease as $\xi_i \to +\infty,$ $i=1,2,3.$
\end{remark}

Relations (\ref{trans1}) and (\ref{transmisiont1}) are the  first and second transmission conditions for the functions $\{\omega_k^{(i)}\}$ at $x=0.$
Thus, the functions  $\{\omega_k^{(1)}, \ \omega_k^{(2)}, \ \omega_k^{(3)} \}$ are determined from the problem
\begin{equation}\label{omega_probl*}
 \left\{\begin{array}{rclr}
  - \pi \dfrac{d}{d{x_i}}\left(h_i^2(x_i)\dfrac{d\omega_k^{(i)}}{d{x_i}}(x_i)\right) & = &
    \widehat{F}_k^{(i)}(x_i), &                                x_i\in I_i, \qquad i=1,2,3,
 \\[3mm]
    \omega_k^{(i)}(1) & = &
    0, &           i=1,2,3,
 \\[3mm]
    \omega_k^{(1)} (0) \ \, = \ \, \omega_k^{(2)} (0) - \delta_k^{(2)} & = &
                                   \omega_k^{(3)} (0) - \delta_k^{(3)}, &
 \\[3mm]
    \sum\limits_{i=1}^3 \pi \,h_i^2 (0) \,\dfrac{d\omega_{k}^{(i)}}{dx_i} (0) & = &
    d_k^*, &
 \end{array}\right.
\end{equation}
where
\begin{multline}\label{right-hand-side_k}
\widehat{F}_k^{(i)}(x_i)
 := \int\limits_{\Upsilon_i(x_i)} f_k^{(i)} (x_i,\overline{\xi}_i) \, d{\overline{\xi}_i}
 -  \eta_k^{(i)} (x_i)
    \int\limits_{\partial\Upsilon_i(x_i)}
    \varphi^{(i)} (x_i,\overline{\xi}_i) \, dl_{\overline{\xi}_i}
 \\
 +  h_i^\prime(x_i)
    \int\limits_{\partial\Upsilon_i(x_i)}
    \dfrac{\partial{u}_{k}^{(i)}}{\partial{x_i}} (x_i,\overline{\xi}_i) \,
    dl_{\overline{\xi}_i},
\quad
x\in I_i, \quad i=1,2,3, \quad k\in\Bbb N,
\end{multline}
and $f_k^{(i)}, \, \eta_k^{(i)}$ are defined in (\ref{f_k}) and (\ref{eta_k}) respectively; recall that ${u}_{1}^{(i)}\equiv 0, \ i=1, 2, 3.$

To solve the problem (\ref{omega_probl*}) at a fixed index $k,$ we make the following substitutions:
$$
\phi_k^{(1)}(x_1)=\omega_k^{(1)}(x_1),\qquad \phi_k^{(2)}(x_2) = \omega_k^{(2)}(x_2) - \delta_k^{(2)} (1- x_2),
\qquad
\phi_k^{(3)}(x_3)=\omega_k^{(3)}(x_3) - \delta_k^{(3)} (1- x_3).
$$
As a result for functions $\{\phi_k^{(i)}\}_{i=1}^3,$ we get the problem
\begin{equation}\label{tilde-omega_probl*}
 \left\{\begin{array}{rclr}
  - \pi \dfrac{d}{d{x_i}}\left(h_i^2(x_i)\dfrac{d\phi_k^{(i)}}{d{x_i}}(x_i)\right) & = &
    \widehat{\Phi}_k^{(i)}(x_i), &                                x_i\in I_i, \qquad i=1,2,3,
 \\[3mm]
    \phi_k^{(i)}(1) & = &
    0, &           i=1,2,3,
 \\[3mm]
    \phi_k^{(1)} (0) \ \, = \ \, \phi_k^{(2)} (0)  & = &
                                   \phi_k^{(3)} (0), &
 \\[3mm]
    \sum\limits_{i=1}^3 \pi \,h_i^2 (0) \,\dfrac{d\phi_{k}^{(i)}}{dx_i} (0) & = &
    d_k^* + \delta_k^{(2)} + \delta_k^{(3)}, &
 \end{array}\right.
\end{equation}
where $\widehat{\Phi}_k^{(1)}(x_1)= \widehat{F}_k^{(1)}(x_1),$ $\widehat{\Phi}_k^{(i)}(x_i)= \widehat{F}_k^{(i)}(x_i) -2\pi \delta_k^{(i)} h_i(x_i) \, h_i'(x_i), \ \ i=2, 3.$

Next for functions
$$
\widetilde{\phi}(x)=\left\{
                                                  \begin{array}{ll}
                                                    \phi^{(1)}(x_1), & \hbox{if} \ \ x_1 \in I_1,\\
                                                    \phi^{(2)}(x_2), & \hbox{if} \ \ x_2 \in I_2, \\
                                                    \phi^{(3)}(x_3), & \hbox{if} \ \ x_3 \in I_3,
                                                  \end{array}
                                                \right.
$$
defined on the graph $I_1\cup I_2\cup I_3,$
we introduce the Sobolev space
$$
{\cal H}_0 := \left\{ \widetilde{\phi}: \
\phi^{(i)} \in H^1(I_i), \ \ \phi^{(i)}(1) = 0, \ \ i=1, 2, 3, \ \ \hbox{and} \ \ \phi^{(1)}(0) = \phi^{(3)}(0) = \phi^{(3)}(0)
\right\}
$$
with the scalar product
$$
\langle \widetilde{\phi}, \widetilde{\psi}\rangle:= \sum_{i=1}^3 \pi \int_0^1 h_i^2(x_i)\, \frac{d\phi^{(i)}}{d{x_i}} \, \frac{d\psi^{(i)}}{d{x_i}}\, dx_i,
\qquad \widetilde{\phi},\, \widetilde{\psi} \in {\cal H}_0.
$$

A function $\widetilde{\phi}_k\in {\cal H}_0$ is called a weak solution to problem (\ref{tilde-omega_probl*}) if it satisfies the integral identity
$$
\langle \widetilde{\phi}_k, \widetilde{\psi}\rangle = \sum_{i=1}^3 \int_0^1 \Phi_k^{(i)}(x_i) \, \psi_i(x_i)\, dx_i + \big(d_k^* + \delta_k^{(2)} + \delta_k^{(3)}\big) \psi_1(0)
\qquad \forall \, \widetilde{\psi} \in {\cal H}_0.
$$

It follows from the Riesz representation theorem that problem (\ref{tilde-omega_probl*}) has a unique weak solution $\widetilde{\phi}_k$ at any fixed
index $k\in \Bbb N.$

\section{Complete asymptotic expansion and its justification}\label{justification}

{\it The first step.}
From the limit problem (\ref{main}) we uniquely determine the first terms $\{\omega_0^{(i)}\}_{i=1}^3$ of the regular asymptotic expansion~(\ref{regul}). Next we uniquely  determine the first term $N_0$ of the inner asymptotic expansion (\ref{junc}); it is a solution to the problem (\ref{junc_probl_general-k=0})
and $N_0=\omega_0^{(1)}(0).$
Then we rewrite problem (\ref{regul_probl_2}) for each index $i=1,2,3$ and a fixed $x_i\in I_i$ in the form
\begin{equation}\label{new_regul_probl_2}
\left\{\begin{array}{rclr}
-\Delta_{\overline{\xi}_i}{u}_{2}^{(i)} (x_i,\overline{\xi}_i)
 & = &
\dfrac{d^{\,2}\omega_0^{(i)}}{d{x_i}^2} (x_i) + f(x) \, \big|_{\overline{x}_i=(0,0)},
 & \overline{\xi}_i\in\Upsilon_i (x_i),
\\[2mm]
-\partial_{\nu_{\overline{\xi}_i}}{u}_{2}^{(i)}(x_i,\overline{\xi}_i)
 & = &
- \ h_i^\prime (x_i)\dfrac{d\omega_0^{(i)}}{d{x_i}} (x_i) + \varphi^{(i)} (x_i,\overline{\xi}_i),
 & \overline{\xi}_i\in\partial\Upsilon_i(x_i),
\\[2mm]
\langle u_2^{(i)} (x_i,\cdot) \rangle_{\Upsilon_i (x_i)}
 & = &
0.
\end{array}\right.
\end{equation}
It is easy to verify that the solvability condition for this problem is satisfied (see (\ref{omega_probl_2})). Therefore,
thanks to the third relation in (\ref{new_regul_probl_2}), there exists a unique solution to the problem (\ref{new_regul_probl_2}).

Now with the help of formulas (\ref{view_solution}), we determine the first terms $\Pi_2^{(i)}, \ i=1,2,3$ of the
boundary-layer expansions (\ref{prim+}), as solutions of problems  (\ref{prim+probl}) that can be rewritten as follows:
\begin{equation}\label{new_prim+probl_2}
 \left\{\begin{array}{rcll}
  -\Delta_{\xi_i^*, \overline{\xi}_i}
   \Pi_2^{(i)}(\xi_i^*,\overline{\xi}_i) & =
   & 0,
   & \xi_i^*\in(0,+\infty), \quad \overline{\xi}_i\in\Upsilon_i(1),
 \\[2mm]
  -\partial_{\nu_{\overline{\xi}_i}}
   \Pi_2^{(i)}(\xi_i^*,\overline{\xi}_i) & =
   & 0,
   & \xi_i^*\in(0,+\infty), \quad \overline{\xi}_i\in\partial\Upsilon_i(1),
 \\[2mm]
  \Pi_2^{(i)}(0,\overline{\xi}_i) & =
   & - u_2^{(i)} (1,\overline{\xi}_i),
   & \overline{\xi}_i\in\Upsilon_i(1),
 \\[2mm]
  \Pi_2^{(i)}(\xi_i^*,\overline{\xi}_i) & \to
   & 0,
   & \xi_i^*\to+\infty, \quad \overline{\xi}_i\in\Upsilon_i(1).
 \end{array}\right.
\end{equation}

{\it The second step.}
The second terms $\{\omega_1^{(i)}\}_{i=1}^3$ of the regular asymptotics (\ref{regul}) are founded  from the problem (\ref{omega_probl*}) that can be rewritten as follows:
\begin{equation}\label{omega_probl_1}
 \left\{\begin{array}{rclr}
  - \pi \dfrac{d}{d{x_i}}\left(h_i^2(x_i)\dfrac{d\omega_1^{(i)}}{d{x_i}}(x_i)\right) & = &
    \int\limits_{\Upsilon_i(x_i)}
    \sum\limits_{j=1}^3 (1-\delta_{ij}) \, \xi_j \, \frac{\partial}{\partial{x_j}}
    f(x) |_{\overline{x}_i=(0,0)} \, d{\overline{\xi}_i}, &
 \\[3mm]
    \omega_1^{(i)}(1) & = &
    0,   \qquad        i=1,2,3, &
\\[2mm]
    \omega_1^{(1)} (0) \ \, = \ \, \omega_1^{(2)} (0) - \delta_1^{(2)} & = &
                                   \omega_1^{(3)} (0) - \delta_1^{(3)}, &
\\[2mm]
    \sum\limits_{i=1}^3 \pi h_i^2 (0) \dfrac{d\omega_{1}^{(i)}}{dx_i} (0) & = &
    d_1^*. &
 \end{array}\right.
\end{equation}
The constants $\delta_1^{(2)}$ and $\delta_1^{(3)}$ are uniquely determined (see Remark~\ref{remark_constant}) by formula
\begin{equation}\label{delta_1}
\delta_1^{(i)}
 =  \int_{\Xi} \mathfrak{N}_i \,
    \sum\limits_{j=1}^3
    \Big(
    \xi_j\dfrac{d\omega_0^{(j)}}{dx_j}(0) \chi_j^{\prime\prime}(\xi_j)
 + 2\dfrac{d\omega_0^{(j)}}{dx_j}(0) \chi_j^{\prime}(\xi_j)
    \Big) \, d\xi,
\quad i=2,3.
\end{equation}
and the constant $d_1^*$ is determined from formula (\ref{const_d_*}) and
\begin{equation}\label{d_1^*}
d_1^*
 =  \ell \sum\limits_{i=1}^3
    \left(
    \pi h_i^2(0)
    f(0)
 -  \int\limits_{\partial\Upsilon_i(0)}
    \varphi^{(i)} (0, \overline{\xi}_i) \ dl_{\overline{\xi}_i}
    \right)
 -  |\Xi^{(0)}|\, f(0),
\end{equation}
where $|\Xi^{(0)}|$ is the volume of the aneurysm $\Xi^{(0)}$ (see Section~\ref{statement}).

Knowing $\{\omega_1^{(i)}\}_{i=1}^3,$ we can uniquely find the seconds terms of the regular asymptotics $\{{u}_{3}^{(i)}\}_{i=1}^3$ (series (\ref{regul})) and boundary asymptotics $\{\Pi_3^{(i)}\}_{i=1}^3$ (series (\ref{prim+})) from the problems
\begin{equation}\label{new_regul_probl_3}
\left\{\begin{array}{rclr}
-\Delta_{\overline{\xi}_i}{u}_{3}^{(i)}(x_i,\overline{\xi}_i)
 & = &
\dfrac{d^{\,2}\omega_1^{(i)}}{d{x_i}^2}(x_i) + \sum\limits_{j=1}^3 (1-\delta_{ij}) \, \xi_j \, \frac{\partial}{\partial{x_j}}
    f(x) |_{\overline{x}_i=(0,0)},
 &
\overline{\xi}_i\in\Upsilon_i(x_i),
 \\[2mm]
-\partial_{\nu_{\overline{\xi}_i}}{u}_{3}^{(i)}(x_i,\overline{\xi}_i)
 & = &
- \ h_i^\prime(x_i)\dfrac{d\omega_1^{(i)}}{d{x_i}}(x_i),
 &
\overline{\xi}_i\in\partial\Upsilon_i(x_i),
 \\[2mm]
\langle u_3^{(i)}(x_i,\cdot) \rangle_{\Upsilon_i(x_i)}
 & = &
0, &
\end{array}\right.
\end{equation}
and
\begin{equation}\label{new_prim+probl_3}
 \left\{\begin{array}{rcll}
  -\Delta_{\xi_i^*, \overline{\xi}_i}
   \Pi_3^{(i)}(\xi_i^*,\overline{\xi}_i) & =
   & 0,
   & \xi_i^*\in(0,+\infty), \quad \overline{\xi}_i\in\Upsilon_i(1),
   \\[2mm]
  -\partial_{\nu_{\overline{\xi}_i}}
   \Pi_3^{(i)}(\xi_i^*,\overline{\xi}_i) & =
   & 0,
   & \xi_i^*\in(0,+\infty), \quad \overline{\xi}_i\in\partial\Upsilon_i(1),
   \\[2mm]
  \Pi_3^{(i)}(0,\overline{\xi}_i) & =
   & - u_3^{(i)} (1,\overline{\xi}_i),
   & \overline{\xi}_i\in\Upsilon_i(1),
   \\[2mm]
  \Pi_3^{(i)}(\xi_i^*,\overline{\xi}_i) & \to
   & 0,
   & \xi_i^*\to+\infty, \quad \overline{\xi}_i\in\Upsilon_i(1),
 \end{array}\right.
\end{equation}
respectively.

The second term  ${N}_1$ of the inner asymptotic expansion (\ref{junc}) is the unique solution of the problem~(\ref{junc_probl_n}) that can now be rewritten in the form
\begin{equation}\label{new_junc_probl_1}
 \left\{\begin{array}{rcll}
  -\Delta_{\xi}{N_1}(\xi) & = &
   0,                         &
   \quad \xi\in\Xi,
 \\[2mm]
   \partial_{\nu_\xi}{N_1}(\xi) & = &
   0,                               &
   \quad \xi\in\Gamma_0,
 \\[2mm]
  -\partial_{\nu_{\overline{\xi}_i}}{N_1}(\xi) & = &
   0,                                              &
   \quad \xi\in\Gamma_i,              \quad i=1,2,3,
 \\[2mm]
   N_1(\xi)                                                               & \sim &
   \omega^{(i)}_{1}(0) + \xi_i\dfrac{d\omega_{0}^{(i)}}{dx}(0),                  &
   \quad \xi_i \to +\infty, \ \ \overline{\xi}_i \in \Upsilon_i(0), \quad i=1,2,3.
 \end{array}\right.
\end{equation}

Thus we have uniquely determined the first terms of the expansions (\ref{regul}), (\ref{prim+}) and (\ref{junc}).

{\it The inductive step.}
Assume that we have determined the coefficients $\omega_0^{(i)}, \omega_1^{(i)},\ldots,\omega_{n-3}^{(i)},$
$u_2^{(i)},  u_3^{(i)},\ldots,u_{n-1}^{(i)}$ of the series (\ref{regul}),
coefficients
$\Pi^{(i)}_2,  \Pi^{(i)}_3,\ldots,\Pi^{(i)}_{n-1}$ of the series (\ref{prim+}),
coefficients
${N}_1,\ldots,{N}_{n-3}$ of the series (\ref{junc}), constants $\delta_1^{(i)},\ldots,\delta_{n-3}^{(i)}$ and
$d_1^*,\ldots,d_{n-3}^*$.

Then we can find the solution $\{\omega_{n-2}^{(i)}\}_{i=1}^3$ of problem (\ref{omega_probl*})
with the constants $\delta_{n-2}^{(2)}, \, \delta_{n-2}^{(3)}$ (see (\ref{const_d_0})) in the first  transmission condition
and with the constant $d_{n-2}^*$ in the second transmision conditions.
It should be noted that constants $\{d_k^*\}_{k\in\Bbb N}$ depend only on $f$ and $\{\varphi^{(i)}\}_{i=1}^3$ and they are uniquely defined  by formulas (\ref{const_d_*}).

The coefficients $u_{n}^{(i)}, \ i=1,2,3,$ are determined as solutions of the following problems:
\begin{equation}\label{new_regul_probl_n}
\left\{\begin{array}{rclr}
-\Delta_{\overline{\xi}_i}{u}_{n}^{(i)}(x_i,\overline{\xi}_i)
 & = &
\dfrac{d^{\,2}\omega_{n-2}^{(i)}}{d{x_i}^2}(x_i) + \dfrac{\partial^{\,2}u_{n-2}^{(i)}}{\partial{x_i}^2}(x_i,\overline{\xi}_i) + f_{n-2}^{(i)}(x_i,\overline{\xi}_i),
 &
\overline{\xi}_i\in\Upsilon_i(x_i),
 \\[2mm]
-\partial_{\nu_{\overline{\xi}_i}}{u}_{n}^{(i)}(x_i,\overline{\xi}_i)
 & = &
- \ h_i^\prime(x_i) \left(\dfrac{d\omega_{n-2}^{(i)}}{d{x_i}}(x_i) + \dfrac{\partial{u}_{n-2}^{(i)}}{\partial{x_i}}(x_i,\overline{\xi}_i)\right)
 &
 \\[2mm]
 &   &
+ \ \eta_{n-2}^{(i)}(x_i,\overline{\xi}_i) \, \varphi^{(i)}(x_i,\overline{\xi}_i),
 &
\overline{\xi}_i\in\partial\Upsilon_i(x_i),
 \\[2mm]
\langle u_n^{(i)}(x_i,\cdot) \rangle_{\Upsilon_i(x_i)}
 & = &
0,
 &
\end{array}\right.
\end{equation}
where $f_k^{(i)}$ and $\eta_k^{(i)}$ are defined in (\ref{f_k}) and (\ref{eta_k}) respectively.
We note that solvability condition for problems (\ref{new_regul_probl_n}) takes place, because
$\langle u_{n-2}^{(i)}(x,\cdot) \rangle_{\Upsilon_i} = 0,$ $i=1,2,3.$ Here $x_i\in I_\varepsilon^{(i)}.$

Further we find the coefficients $\Pi_{n}^{(i)}, \ i=1,2,3$ of the boundary asymptotic expansions (\ref{prim+}) as solutions of problems (\ref{prim+probl}) that can be rewritten in the form
\begin{equation}\label{new_prim+probl_n}
 \left\{\begin{array}{rcll}
  -\Delta_{\xi_i^*, \overline{\xi}_i}
   \Pi_n^{(i)}(\xi_i^*,\overline{\xi}_i) & =
   & 0,
   & \xi_i^*\in(0,+\infty), \quad \overline{\xi}_i\in\Upsilon_i(1),
   \\[2mm]
  -\partial_{\nu_{\overline{\xi}_i}}
   \Pi_n^{(i)}(\xi_i^*,\overline{\xi}_i) & =
   & 0,
   & \xi_i^*\in(0,+\infty), \quad \overline{\xi}_i\in\partial\Upsilon_i(1),
   \\[2mm]
  \Pi_n^{(i)}(0,\overline{\xi}_i) & =
   & - u_n^{(i)} (1,\overline{\xi}_i),
   & \overline{\xi}_i\in\Upsilon_i(1),
   \\[2mm]
  \Pi_n^{(i)}(\xi_i^*,\overline{\xi}_i) & \to
   & 0,
   & \xi_i^*\to+\infty, \quad \overline{\xi}_i\in\Upsilon_i(1).
 \end{array}\right.
\end{equation}

Finally, we find the coefficient ${N}_{n-2}$ of the inner asymptotic expansion (\ref{junc}),
which is the unique solution of the problem (\ref{junc_probl_n}) that can now be rewritten in the form
\begin{equation}\label{junc_probl_n-2}
 \left\{
 \begin{array}{rcll}
  -\Delta_{\xi}{{N}_{n-2}}(\xi)            & = &
   \dfrac{(\xi, \nabla_x)^{n-4}f(0)}{(n-4)!} , &
   \quad \xi\in\Xi,
 \\[4mm]
   \partial_{\nu_\xi}{N_{n-2}}(\xi) & = &
   0,                                   &
   \quad \xi\in\Gamma_0,
 \\[2mm]
  -\partial_{\nu_{\overline{\xi}_i}}{N_{n-2}}(\xi)                            & = &
   \dfrac{\xi_i^{n-4}}{(n-4)!}
   \dfrac{\partial^{n-4}\varphi^{(i)}}{\partial{x}_i^{n-4}} (0,\overline{\xi}_i), &
   \quad \xi\in\Gamma_i,
 \\[2mm]
   {N}_{n-2}(\xi)                                                      & \sim &
   \sum\limits_{j=0}^{n-2} \dfrac{\xi_i^j}{j!}
   \dfrac{d^j\omega_{n-j-2}^{(i)}}{dx_i^j} (0)
  +\sum\limits_{j=0}^{n-4}
   \dfrac{\xi_i^j}{j!}
   \dfrac{\partial^j{u_{n-j-2}^{(i)}}}{\partial x_i^j} (0, \overline{\xi}_i), &
   \\[3mm]
   & \text{as} &\quad \xi_i\to+\infty, \quad \overline{\xi}_i\in\Upsilon_i(0) \quad  i=1,2,3. &
 \end{array}
 \right.
\end{equation}

Thus we  can successively determine all coefficients of series (\ref{regul}), (\ref{prim+}) and (\ref{junc}).

\subsection{Justification}

With the help of the series (\ref{regul}), (\ref{prim+}), (\ref{junc}) we construct the following series:
\begin{equation}\label{asymp_expansion}
    \sum\limits_{k=0}^{+\infty} \varepsilon^{k}
    \Big(
    \overline{u  }_k (x, \, \varepsilon, \, \alpha)
  + \overline{\Pi}_k (x, \, \varepsilon)
  + \overline{N  }_k (x, \, \varepsilon, \, \alpha)
    \Big),
\quad x\in\Omega_\varepsilon,
\end{equation}
where
$$
\overline{u}_k (x, \, \varepsilon, \, \alpha)
 := \sum\limits_{i=1}^3 \chi_\ell^{(i)} \left(\frac{x_i}{\varepsilon^\alpha}\right)
    \left( u_k^{(i)} \left( x_i, \frac{\overline{x}_i}{\varepsilon} \right)
 +  \omega_k^{(i)} (x_i) \right),
 \quad ( u_0\equiv u_1 \equiv 0 ),
$$
$$
\overline{\Pi}_k (x, \, \varepsilon)
 := \sum\limits_{i=1}^3 \chi_\delta^{(i)} (x_i) \,
    \Pi_k^{(i)} \left( \frac{1-x_i}{\varepsilon}, \frac{\overline{x}_i}{\varepsilon} \right),
 \quad ( \Pi_0\equiv \Pi_1 \equiv 0 ),
$$
$$
\overline{N}_k (x, \, \varepsilon, \, \alpha)
 := \left(1 - \sum\limits_{i=1}^3 \chi_\ell^{(i)} \left(\frac{x_i}{\varepsilon^\alpha}\right) \right)
    N_k \left( \frac{x}{\varepsilon} \right),
 \quad \left( N_0 \equiv \omega_0^{(1)}(0) \right),
$$
$\alpha$ is a fixed number from the interval $(\frac23, 1),$ $\chi_\ell^{(i)}, \ \chi_\delta^{(i)}$ are smooth cut-off functions defined by formulas
\begin{equation}\label{cut-off-functions}
\chi_\ell^{(i)} (x_i) =
\left\{\begin{array}{ll}
1, & \text{if} \ \ x_i \ge 3 \, \ell,
\\
0, & \text{if} \ \ x_i \le 2 \, \ell,
\end{array}\right.
\quad
\chi_\delta^{(i)}(x_i)=
\left\{\begin{array}{ll}
1, & \text{if} \ \ x_i \ge 1 -  \delta,
\\
0, & \text{if} \ \ x_i \le 1 - 2\delta,
\end{array}\right.
\quad i=1,2,3,
\end{equation}
and $\delta$ is a sufficiently small fixed positive number.

\begin{theorem}\label{mainTheorem}
 Series $(\ref{asymp_expansion})$ is the asymptotic expansion for the solution $u_\varepsilon$ to the boundary-value problem~$(\ref{probl})$
 in the Sobolev space $H^1(\Omega_\varepsilon),$ i.e., $\forall \,  m \in\Bbb{N} \ (m\geq2)  \ \ \exists \, {C}_m >0 \ \ \exists \, \varepsilon_0>0 \ \
      \forall\, \varepsilon\in(0, \varepsilon_0) :$
\begin{equation}\label{t0}
            \| \, u_\varepsilon - U_\varepsilon^{(m)}\|_{H^1(\Omega_\varepsilon)}
 \leq {C}_m \ \varepsilon^{\alpha (m\,-\frac{1}{2}) + \frac{1}{2} },
\end{equation}
 where
\begin{equation}\label{aaN}
U^{(m)}_{\varepsilon}(x)
 =  \sum\limits_{k=0}^{m} \varepsilon^{k}
    \Big(
    \overline{u  }_k (x, \, \varepsilon, \, \alpha)
  + \overline{\Pi}_k (x, \, \varepsilon)
  + \overline{N  }_k (x, \, \varepsilon, \, \alpha)
    \Big),
\quad x\in\Omega_\varepsilon,
\end{equation}
is the partial sum of $(\ref{asymp_expansion}).$
\end{theorem}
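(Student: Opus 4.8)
The plan is to use the standard energy (variational) method: bound the discrepancy that the partial sum $U_\varepsilon^{(m)}$ leaves in the integral identity (\ref{int-identity}), and then close the argument with a Friedrichs inequality. First I would set $R_\varepsilon^{(m)} := u_\varepsilon - U_\varepsilon^{(m)}$ and verify that it is an admissible test function, i.e. that it has zero trace on the ends $\Upsilon_\varepsilon^{(i)}(1)$. This holds by construction: at $x_i=1$ one has $\chi_\ell^{(i)}\equiv 1$, $\chi_\delta^{(i)}\equiv 1$ and $1-\sum_j\chi_\ell^{(j)}\equiv 0$, while the boundary-layer value $\Pi_k^{(i)}(0,\cdot)=-u_k^{(i)}(1,\cdot)-\omega_k^{(i)}(1)$ exactly cancels the trace of the regular part (together with $\omega_k^{(i)}(1)=0$). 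Subtracting the identity (\ref{int-identity}) written for $U_\varepsilon^{(m)}$ from the one for $u_\varepsilon$ yields
\begin{equation*}
\int_{\Omega_\varepsilon}\nabla R_\varepsilon^{(m)}\cdot\nabla\psi\,dx=\mathcal{F}_\varepsilon^{(m)}(\psi):=\int_{\Omega_\varepsilon}f\psi\,dx-\sum_{i=1}^3\int_{\Gamma_\varepsilon^{(i)}}\varphi_\varepsilon\psi\,d\sigma_x-\int_{\Omega_\varepsilon}\nabla U_\varepsilon^{(m)}\cdot\nabla\psi\,dx
\end{equation*}
for every admissible $\psi$. Choosing $\psi=R_\varepsilon^{(m)}$ reduces the whole theorem to the bound $|\mathcal{F}_\varepsilon^{(m)}(\psi)|\le C_m\,\varepsilon^{\alpha(m-\frac12)+\frac12}\,\|\nabla\psi\|_{L^2(\Omega_\varepsilon)}$; a Friedrichs inequality $\|\psi\|_{L^2(\Omega_\varepsilon)}\le C\|\nabla\psi\|_{L^2(\Omega_\varepsilon)}$ (valid with an $\varepsilon$-uniform constant, since every point is joined to a Dirichlet end by a longitudinal path of length $\mathcal{O}(1)$) then converts it into (\ref{t0}).

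Next I would compute $\mathcal{F}_\varepsilon^{(m)}$ explicitly. Integrating the last term by parts and using $\psi|_{x_i=1}=0$, the functional turns into an interior residual $\int_{\Omega_\varepsilon}\bigl(f+\Delta U_\varepsilon^{(m)}\bigr)\psi\,dx$ plus boundary residuals on $\Gamma_\varepsilon^{(0)}$ and on the lateral surfaces $\Gamma_\varepsilon^{(i)}$ (the discrepancy $\partial_{\boldsymbol\nu}U_\varepsilon^{(m)}+\varphi_\varepsilon$). Because every coefficient solves its own recurrence problem — the regular terms solve (\ref{regul_probl_k})–(\ref{omega_probl_k}), the boundary-layer terms solve (\ref{prim+probl}), and the inner terms solve (\ref{junc_probl_n}) — the ``bulk'' contributions cancel order by order, and the surviving residual splits into three groups: (a) truncation tails, namely the Taylor remainders of $f$ and $\varphi^{(i)}$ beyond order $m$ together with the unpaired top-order terms $\partial_{x_i}^2 u_{m-1}^{(i)}$, $\partial_{x_i}^2 u_m^{(i)}$; (b) commutators $2\nabla\chi\cdot\nabla(\cdot)+(\Delta\chi)(\cdot)$ produced because the cut-offs $\chi_\ell^{(i)}(x_i/\varepsilon^\alpha)$ and $\chi_\delta^{(i)}(x_i)$ do not commute with $\Delta$; and (c) exponentially small remainders coming from the tails in (\ref{as_estimates}) and (\ref{inner_asympt}).

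The heart of the proof is the estimate of group (b) in the matching zone $x_i\in[2\ell\varepsilon^\alpha,3\ell\varepsilon^\alpha]$, where $\chi_\ell^{(i)}$ glues the regular and inner parts. There the commuted objects assemble into $\nabla\chi_\ell^{(i)}\cdot\nabla\bigl(\text{reg}-\text{inn}\bigr)$ and $(\Delta\chi_\ell^{(i)})\bigl(\text{reg}-\text{inn}\bigr)$, with $|\nabla\chi_\ell^{(i)}|=\mathcal{O}(\varepsilon^{-\alpha})$ and $|\Delta\chi_\ell^{(i)}|=\mathcal{O}(\varepsilon^{-2\alpha})$. I would exploit the matching identity encoded in (\ref{junc_probl_n}): summing the asymptotic forms $G_k=\omega_k^{(i)}(0)+\Psi_k^{(i)}$ reconstructs exactly the Taylor expansions at $x_i=0$ of $\sum_k\varepsilon^k\bigl(u_k^{(i)}+\omega_k^{(i)}\bigr)$, so that — modulo the exponentially small terms $N_k-G_k=\mathcal{O}(\exp(-\gamma_i\xi_i))$, harmless since $\xi_i=x_i/\varepsilon\sim\varepsilon^{\alpha-1}\to+\infty$ on the zone — the discrepancy reduces to a pure Taylor remainder of size $\mathcal{O}(\varepsilon^{\alpha(m+1)})$. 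Multiplying by the cut-off derivatives and integrating over the zone (of volume $\mathcal{O}(\varepsilon^{2+\alpha})$), then estimating the volume residuals by Cauchy–Schwarz and Friedrichs and the thin-surface residuals on $\Gamma_\varepsilon^{(i)}$ by an anisotropic trace inequality (which carries a factor $\varepsilon^{-1/2}$), produces the power $\varepsilon^{\alpha(m-\frac12)+\frac12}$. The $\chi_\delta^{(i)}$ commutators near the ends are super-algebraically small by (\ref{as_estimates}), and the truncation tails (a) are of order $\varepsilon^m$, which is subordinate since $\alpha<1$.

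The main obstacle is precisely this bookkeeping in the matching zone: one must show that the blow-up $\varepsilon^{-\alpha}$, $\varepsilon^{-2\alpha}$ of the cut-off derivatives is defeated by the smallness of the discrepancy — a delicate competition between the polynomial Taylor remainder (small because $x_i\sim\varepsilon^\alpha$) and the exponential tails of the inner and boundary-layer terms — while tracking the extra $\varepsilon^{-1/2}$ generated by the traces on the thin lateral boundaries. This balance fixes the final exponent and also clarifies the admissible range $\alpha\in(\tfrac23,1)$: the upper bound $\alpha<1$ guarantees $\xi_i\to+\infty$ on the matching zone, so the exponential remainders are negligible, whereas the lower bound $\alpha>\tfrac23$ is what keeps the \emph{polynomial growth} of the inner terms $N_k$ — amplified on the $\mathcal{O}(\varepsilon^\alpha)$-long portion of the inner region — subordinate, so that the matching-zone commutators remain the dominant source of error.
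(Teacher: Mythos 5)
Your proposal is correct and follows essentially the same route as the paper's proof: substitute the partial sum $U_\varepsilon^{(m)}$ into the problem, split the residual into regular truncation terms, cut-off commutators treated via the matching identity (Taylor remainders at $x_i=0$ plus the exponentially small tails $N_k-G_k$ and $\Pi_k^{(i)}$), and exponentially small end-layer terms, then close with the energy identity for $u_\varepsilon-U_\varepsilon^{(m)}$, the Friedrichs inequality and the $\varepsilon^{-1/2}$ trace factor on the thin lateral boundaries --- exactly the structure of the paper's residuals $R^{(m)}_{\varepsilon,1},\dots,R^{(m)}_{\varepsilon,7}$, $\breve R^{(m)}_{\varepsilon,8}$, $\breve R^{(m)}_{\varepsilon,9}$ and estimates (\ref{t2_1})--(\ref{t3_9}). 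One harmless bookkeeping correction: executed carefully, the matching-zone commutators contribute only $\mathcal{O}\bigl(\varepsilon^{\alpha(m-\frac12)+1}\bigr)$ after Friedrichs/trace, and the stated exponent $\alpha(m-\frac12)+\frac12$ is the common bound that also absorbs the plain truncation residuals $\varepsilon^{m}$ (volume) and $\varepsilon^{m+\frac12}$ (lateral surface, before the trace factor), via $(1-\alpha)(m-\frac12)\ge 0$; thus for small $m$ and $\alpha$ near $1$ it is those truncation terms, not the commutators, that are binding, and the restriction $\alpha>\tfrac23$ is not actually needed for the theorem itself but for the corollaries.
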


\begin{remark}
Hereinafter, all constants in inequalities are independent of the parameter~$\varepsilon.$
\end{remark}

\begin{proof}
Take an arbitrary $m\geq2, \ m\in\Bbb{N}$. Substituting the partial sum $ U^{(m)}_{\varepsilon}$
in the equations and the boundary conditions of problem~(\ref{probl}) and taking into account  relations (\ref{main})--(\ref{new_prim+probl_n}) for the coefficients of series (\ref{asymp_expansion}), we find
\begin{equation}\label{t1}
\Delta U^{(m)}_{\varepsilon}(x) + f(x)
 =  \sum\limits_{j=1}^{7} R^{(m)}_{\varepsilon, j}(x)
 =: R^{(m)}_\varepsilon(x),
\quad x\in\Omega_\varepsilon.
\end{equation}
where
\begin{equation}\label{t1_1}
R^{(m)}_{\varepsilon, 1}(x)
 =  \sum\limits_{k=m-1}^m \varepsilon^{k} \ \sum\limits_{i=1}^3
    \chi_\ell^{(i)} \left(\frac{x_i}{\varepsilon^\alpha}\right)
    \left(
    \dfrac{\partial^{2} u_{k}^{(i)}}{\partial{x}_i^2}
    \left(x_i,\frac{\overline{x}_i}{\varepsilon}\right)
 +  \dfrac{d^2\omega_{k}^{(i)}}{dx_i^2} (x_i)
    \right),
\end{equation}
\begin{multline}\label{t1_2}
R^{(m)}_{\varepsilon, 2}(x)
 =  \sum\limits_{k=1}^{m}\varepsilon^k \sum\limits_{i=1}^3
    \Bigg(
 - 2\varepsilon^{-1-\alpha}
    \frac{d\chi_\ell^{(i)}}{d\zeta_i}(\zeta_i)
    \bigg(
    \frac{\partial{N}_{k}}{\partial{\xi_i}}(\xi)
 -  \frac{\partial{G}_{k}}{\partial{\xi_i}}(\xi)
    \bigg)
\\
 -  \varepsilon^{-2\alpha}
    \frac{d^2\chi_\ell^{(i)}}{d\zeta_i^2}(\zeta_i)
    \Big( {N}_{k}(\xi) - G_{k}(\xi) \Big)
    \Bigg)
    \Bigg|_{\zeta_i=\frac{x_i}{\varepsilon^{\alpha}},\, \xi=\frac{x}{\varepsilon}},
\end{multline}
\begin{multline}\label{t1_3}
R^{(m)}_{\varepsilon, 3}(x)
 =  \sum\limits_{k=2}^{m} \varepsilon^{k} \sum\limits_{i=1}^3
    \bigg(
 - 2\varepsilon^{-1}\frac{d\chi_\delta^{(i)}}{dx_i} (x_i) \,
    \frac{\partial\Pi^{(i)}_{k}}{\partial{\xi}_i} ( \xi_i^*, \overline{\xi}_i )
\\
 +  \frac{d^2\chi_\delta^{(i)}}{dx_i^2} (x_i) \,
    \Pi^{(i)}_{k} ( \xi_i^*, \overline{\xi}_i )
    \bigg)
    \bigg|_{\xi_i^*=\frac{1-x_i}{\varepsilon},\, \overline{\xi}=\frac{\overline{x}_i}{\varepsilon}},
\end{multline}
\begin{multline}\label{t1_4}
R^{(m)}_{\varepsilon, 4}(x)
 =  \varepsilon^{m-1}
    \sum\limits_{i=1}^3 \chi_\ell^{(i)} \left(\frac{x_i}{\varepsilon^\alpha}\right)
    \frac{\varepsilon^{-1}}{(m-2)!}
\\
\times
    \Bigg[
     \sum\limits_{p=1}^3 (1-\delta_{ip}) \int\limits_0^{x_p}
     \bigg(
      \sum\limits_{s=1}^3 (1-\delta_{is})
      \frac{x_s-y_s}{\varepsilon} \frac{\partial}{\partial{y}_s}
     \bigg)^{m-2}
     \frac{\partial{f}}{\partial y_p}(y)
     \bigg|_{y_i=x_i, \ y_s=0, \, s \neq i,p} \, dy_p
\\
 +   \Bigg( \prod_{p=1, \, p \neq i}^3 \int\limits_0^{x_p} \Bigg)
     \bigg(
      \sum\limits_{s=1}^3 (1-\delta_{is})
      \frac{x_s-y_s}{\varepsilon} \frac{\partial}{\partial{y}_s}
     \bigg)^{m-2}
     \frac{\partial^2{f}}{\prod_{p=1, \, p \neq i}^3 \partial y_p}(y)
     \bigg|_{y_i=x_i} \, d\overline{y}_i
    \Bigg],
\end{multline}
\begin{multline}\label{t1_5}
R^{(m)}_{\varepsilon, 5}(x)
 =  \varepsilon^{\alpha(m-1)}
    \bigg( 1 - \sum\limits_{i=1}^3 \chi_\ell^{(i)} \left(\frac{x_i}{\varepsilon^\alpha}\right) \bigg)
    \frac{\varepsilon^{-\alpha}}{(m-2)!} \,
    \times \Bigg[
     \sum\limits_{p=1}^3 \int\limits_0^{x_p}
     \left( \frac{x-y}{\varepsilon^\alpha}, \, \nabla_y \right)^{m-2}
     \frac{\partial{f}}{\partial y_p}(y)
     \bigg|_{\overline{y}_p=(0,0)} \, dy_p
\\
 +   \sum\limits_{p=1}^3
     \Bigg( \prod_{s=1, \, s \neq p}^3 \int\limits_0^{x_s} \Bigg)
     \left( \frac{x-y}{\varepsilon^\alpha}, \, \nabla_y \right)^{m-2}
     \frac{\partial^2{f}}{\prod_{s=1, \, s \neq p}^3 \partial y_s}(y)
     \bigg|_{y_p=0} \, d\overline{y}_s
\\
 +   \int\limits_0^{x_1} \int\limits_0^{x_2} \int\limits_0^{x_3}
     \left( \frac{x-y}{\varepsilon^\alpha}, \, \nabla_y \right)^{m-2}
     \frac{\partial^3{f}}{\partial y_1 \partial y_2 \partial y_3}(y) \,
     dy_3 dy_2 dy_1
    \Bigg],
\end{multline}
\begin{multline}\label{t1_6}
R^{(m)}_{\varepsilon, 6}(x)
 =  \varepsilon^{\alpha(m-1)}
    \sum\limits_{i=1}^3 \ 2 \frac{d \chi_\ell^{(i)}}{d\zeta_i} (\zeta_i)
    \bigg|_{\zeta_i=\frac{x_i}{\varepsilon^\alpha}} \cdot
    \Bigg[
     \varepsilon^{(1-\alpha)m}
     \left(
      \frac{\partial u_{m}^{(i)}}{\partial x_i}
      \left( x_i, \frac{\overline{x}_i}{\varepsilon} \right)
 +    \frac{d \omega_{m}^{(i)}}{d x_i} (x_i)
     \right)
\\
 +   \sum\limits_{k=0}^{m-1}
     \frac{\varepsilon^{(1-\alpha)k} \, \varepsilon^{-\alpha}}{(m-k-1)!}
     \int\limits_0^{x_i}
     \left( \frac{x_i-y_i}{\varepsilon^\alpha} \right)^{m-k-1}
     \frac{\partial^{m-k+1}}{\partial y_i^{m-k+1}}
     \left(
      u_k^{(i)} \left( y_i, \frac{\overline{x}_i}{\varepsilon^\alpha} \right)
 +    \omega_k^{(i)} (y_i)
     \right)
     dy_i
    \Bigg],
\end{multline}
\begin{multline}\label{t1_7}
R^{(m)}_{\varepsilon, 7}(x)
 =  \varepsilon^{\alpha(m-1)}
    \sum\limits_{i=1}^3 \ \frac{d^2 \chi_\ell^{(i)}}{d\zeta_i^2} (\zeta_i)
    \bigg|_{\zeta_i=\frac{x_i}{\varepsilon^\alpha}}
\\
 \times\,
     \sum\limits_{k=0}^{m} \varepsilon^{(1-\alpha)k}
     \frac{\varepsilon^{-\alpha}}{(m-k)!}
     \int\limits_0^{x_i}
     \left( \frac{x_i-y_i}{\varepsilon^\alpha} \right)^{m-k}
     \frac{\partial^{m-k+1}}{\partial y_i^{m-k+1}}
     \left(
      u_k^{(i)} \left( y_i, \frac{\overline{x}_i}{\varepsilon^\alpha} \right)
 +    \omega_k^{(i)} (y_i)
     \right)
     dy_i.
\end{multline}

From (\ref{t1_1}) we conclude that
\begin{equation}\label{t2_1}
\exists\, \check{C}_m>0  \ \ \exists \, \varepsilon_0>0 \ \
\forall\, \varepsilon\in (0, \varepsilon_0) : \quad
      \sup\limits_{x\in\Omega_{\varepsilon}}
      \left|R_{\varepsilon, 1}^{(m)}(x)\right|
 \leq \check{C}_m \varepsilon^{m-1}.
\end{equation}
Due to the exponential decreasing of functions $\{{N}_{k} - {G}_{k},  \Pi^{(i)}_{k} \}$ (see Remark~\ref{rem_exp-decrease} and (\ref{as_estimates})) and the fact that the support of the derivatives of cut-off function $\chi_\ell^{(i)}$ belongs to the set $\{x_i: 2 \ell \varepsilon^\alpha \le x_i \le 3 \ell \varepsilon^\alpha\},$ we arrive that
\begin{equation}\label{t2_2}
      \sup\limits_{x\in\Omega_{\varepsilon}}
      \left|R_{\varepsilon, 2}^{(m)}(x)\right|
 \leq \check{C}_m \varepsilon^{-1-\alpha}
      \exp{
       \left(
       -\frac{2 \ell}{\varepsilon^{1-\alpha}} \
        \min\limits_{i=1,2,3}{\gamma_i}
       \right)
          },
\end{equation}
similarly we obtain that
\begin{equation}\label{t2_3}
      \sup\limits_{x\in\Omega_{\varepsilon}}
      \left|R_{\varepsilon, 3}^{(m)}(x)\right|
 \leq \check{C}_m \varepsilon^{-1}
      \exp{
       \left(
       -\frac{\delta}{\varepsilon}\
        \min\limits_{i=1,2,3}{\lambda_1^{(i)}}
       \right)
          }.
\end{equation}
We calculate terms $R_{\varepsilon, j}^{(m)}, \ \ j=4,5,6,7$ with the help of the Taylor formula with the integral remaining term for functions $f$, $\{\omega_k\}$ and $\{u_{k}\}$ at the point $x_i=0$. It is easy to check that
\begin{equation}\label{t2_4567}
      \sup\limits_{x\in\Omega_{\varepsilon}}
      \left| R_{\varepsilon, 4}^{(m)}(x)\right|
 \leq \check{C}_m\varepsilon^{m-1},
\qquad
      \sup\limits_{x\in\Omega_{\varepsilon}}
      \left| R_{\varepsilon, j}^{(m)}(x)\right|
 \leq \check{C}_m\varepsilon^{\alpha(m-1)},
\quad
j=5,6,7.
\end{equation}

The partial sum leaves the following residuals on the boundary of $\Omega_\varepsilon:$
$$
\begin{array}{rclll}
    \partial_\nu{U_\varepsilon^{(m)}}(x)
 +  {\varepsilon\varphi^{(i)}}
    \Big( x_i, \dfrac{\overline{x}_i}{\varepsilon} \Big)
& = &
     \breve{R}_{\varepsilon, {(i)}}^{(m)}(x),
& x\in\Gamma_\varepsilon^{(i)},                        & i=1,2,3,
\\[2mm]
    U_\varepsilon^{(m)}(x)
& = &
    0, & x\in\Upsilon_\varepsilon^{(i)}(1), & i=1,2,3,
\\[2mm]
    \partial_\nu{U_\varepsilon^{(m)}}(x)
& = &
    0, & x\in\Gamma_\varepsilon^{(0)}, &
\end{array}
$$
where $\breve{R}_{\varepsilon, {(i)}}^{(m)}(x):=\sum\limits_{j=8}^9\breve{R}_{\varepsilon, j, {(i)}}^{(m)},$
\begin{multline}\label{breve_R_8}
\breve{R}_{\varepsilon, 8, (i)}^{(m)}(x)
 =  \frac{\varepsilon}{\sqrt{ 1 + \varepsilon^2 |h_i^\prime(x_i)|^2 \, }}
    \chi_\ell^{(i)} \left(\frac{x_i}{\varepsilon^\alpha}\right)
        \cdot
    \Bigg[
 -   \sum\limits_{k=m-1}^m \varepsilon^k h_i^\prime (x_i)
     \left(
      \frac{\partial u_k^{(i)}}{\partial x_i}
      \Big( x_i, \frac{\overline{x}_i}{\varepsilon} \Big)
 +    \frac{d\omega_k^{(i)}}{dx_i} (x_i)
     \right)
\\
 +    \varepsilon^{ 2 \lceil \frac{m}{2} \rceil }
     \frac{(-1)^{\lceil \frac{m}{2} \rceil} (2 \lceil \frac{m}{2} \rceil)! \lceil \frac{m}{2} \rceil \varepsilon^{-2}}
          {(1 - 2 \lceil \frac{m}{2} \rceil)(\lceil \frac{m}{2} \rceil !)^2 4^{\lceil \frac{m}{2} \rceil}} \
    \varphi^{(i)} \Big( x_i, \dfrac{\overline{x}_i}{\varepsilon} \Big)
          \int\limits_{0}^{\varepsilon^2 |h_i^{\prime}(x_i)|^2}
     \left(
      \frac{\varepsilon^2 |h_i^{\prime}(x_i)|^2 - t}{\varepsilon^2}
     \right)^{\lceil \frac{m}{2} \rceil - 1}
     (1+t)^{\frac12 - \lceil \frac{m}{2} \rceil} dt
       \Bigg],
\end{multline}
\begin{equation}\label{breve_R_9}
\breve{R}_{\varepsilon, 9, (i)}^{(m)}(x)
 =  \varepsilon^{1+\alpha(m-1)}
    \bigg( 1 - \chi_\ell^{(i)} \left(\frac{x_i}{\varepsilon^\alpha}\right) \bigg)
    \frac{\varepsilon^{-\alpha}}{(m-2)!}
    \,   \int\limits_0^{x_i} \left( \frac{x_i-y_i}{\varepsilon^\alpha} \right)^{m-2}
    \frac{\partial^{m-1}\varphi^{(i)}}{\partial y_i^{m-1}}
    \Big( y_i, \frac{\overline{x}_i}{\varepsilon} \Big) \ dy_i.
\end{equation}
In (\ref{breve_R_8}) the symbol $\lceil \eta \rceil $ denotes the ceiling of the number $\eta.$
It follows from (\ref{breve_R_8}) and (\ref{breve_R_9}) that there exist positive constants $\overline{C}_m$ and   $\overline{\varepsilon}_0$
such that for all $i=1,2,3$ and
\begin{equation}\label{t2_89}
\forall \, \varepsilon\in(0, \overline{\varepsilon}_0): \quad
      \sup\limits_{x\in\Gamma_\varepsilon^{(i)}}
      \left|\breve{R}_{\varepsilon, 8, (i)}^{(m)}(x)\right|
 \leq \overline{C}_m\varepsilon^{m}, \quad
      \sup\limits_{x\in\Gamma_\varepsilon^{(i)}}
      \left|\breve{R}_{\varepsilon, 9, (i)}^{(m)}(x)\right|
 \leq \overline{C}_m\varepsilon^{1+\alpha(m-1)}.
\end{equation}

Using (\ref{t2_1})~--~(\ref{t2_4567}) and (\ref{t2_89}), we obtain the following estimates:
\begin{equation}\label{t3_14}
      \left\|R_{\varepsilon, j}^{(m)}\right\|_{L^2 (\Omega_\varepsilon)}
 \leq \check{C}_m \sqrt{\pi \sum_{i=1}^3 \max\limits_{x_i\in I_i}h_i^2(x_i)\ }
      \ \varepsilon^{m},
\quad j=1,4,
\end{equation}
\begin{equation}\label{t3_2}
      \left\|R_{\varepsilon, 2}^{(m)}\right\|_{L^2 (\Omega_\varepsilon)}
 \leq \check{C}_m \sqrt{\pi \ell \sum_{i=1}^3 h_i^2(0)\ }
      \ \varepsilon^{-\frac{\alpha}{2}} \
      \exp{
       \left(
       -\frac{2 \ell}{\varepsilon^{1-\alpha}}\
        \min\limits_{i=1,2,3}{\gamma_i}
       \right)
          },
\end{equation}
\begin{equation}\label{t3_3}
      \left\|R_{\varepsilon, 3}^{(m)}\right\|_{L^2 (\Omega_\varepsilon)}
 \leq \check{C}_m \sqrt{\pi \sum_{i=1}^3 h_i^2(1)\ }
      \ \delta^{\frac{1}{2}} \
      \exp{
       \left(
       -\frac{\delta}{\varepsilon} \
        \min\limits_{i=1,2,3}{\lambda_1^{(i)}}
       \right)
          },
\end{equation}
\begin{equation}\label{t3_5}
      \left\|R_{\varepsilon, 5}^{(m)}\right\|_{L^2 (\Omega_\varepsilon)}
 \leq \check{C}_m \sqrt{|\Xi^{(0)}| + 3 \pi \ell \sum_{i=1}^3 h_i^2(0)\ }
      \ \varepsilon^{\alpha(m-\frac12)+1},
\end{equation}
\begin{equation}\label{t3_67}
      \left\|R_{\varepsilon, j}^{(m)}\right\|_{L^2 (\Omega_\varepsilon)}
 \leq \check{C}_m \sqrt{\pi \ell \sum_{i=1}^3 h_i^2(0)\ }
      \ \varepsilon^{\alpha(m-\frac12)+1}, \quad j=6,7,
\end{equation}
\begin{equation}\label{t3_8}
      \left\|\breve{R}_{\varepsilon, 8, (i)}^{(m)}\right\|_{L^2 (\Gamma_\varepsilon^{(i)})}
 \leq \overline{C}_m \sqrt{2 \pi \max\limits_{x_i\in I_i} h_i(x_i) \ }
      \ \varepsilon^{m+\frac12}, \quad i=1,2,3,
\end{equation}
\begin{equation}\label{t3_9}
      \left\|\breve{R}_{\varepsilon, 9, (i)}^{(m)}\right\|_{L^2 (\Gamma_\varepsilon^{(i)})}
 \leq \overline{C}_m \sqrt{6 \pi \ell h_i(0) \ }
      \ \varepsilon^{\alpha(m-\frac12)+\frac32}, \quad i=1,2,3.
\end{equation}
Thus, the difference  $W_\varepsilon := u_\varepsilon - U_\varepsilon^{(m)}$ satisfies the following relations:
\begin{equation}\label{nevyazka}
\left\{\begin{array}{rclll}
 - \Delta W_\varepsilon        & = & R_\varepsilon^{(m)}               &
   \mbox{in} \ \Omega_\varepsilon, &
\\[2mm]
 - \partial_\nu W_\varepsilon  & = & \breve{R}_{\varepsilon,(i)}^{(m)} &
   \mbox{on} \ \Gamma_\varepsilon^{(i)}, &                      i=1,2,3,
\\[2mm]
   W_\varepsilon               & = & 0                                 &
   \mbox{on} \ \Upsilon_\varepsilon^{(i)}(1), &                 i=1,2,3,
\\[2mm]
   \partial_\nu{W_\varepsilon} & = & 0                                 &
   \mbox{on} \ \Gamma_\varepsilon^{(0)}, &
\end{array}\right.
\end{equation}

From (\ref{nevyazka}) we derive the following integral relation:
$$
    \int \limits_{\Omega_\varepsilon} {|\nabla W_\varepsilon |}^2 dx
 =  \int \limits_{\Omega_\varepsilon} R_\varepsilon^{(m)} \, W_\varepsilon \,dx
 -  \sum \limits_{i=1}^3 \
    \int \limits_{\Gamma_\varepsilon^{(i)}}
    \breve{R}_{\varepsilon, (i)}^{(m)} \, W_\varepsilon \, d\sigma_x.
$$
In view of the Friedrichs inequality  and estimates  (\ref{t3_14})~--~(\ref{t3_9}), this yields the following inequality:
$$
      \int \limits_{\Omega_\varepsilon} {|\nabla W_\varepsilon |}^2 dx
 \leq \check{c}_m \
      \varepsilon^{\alpha(m-\frac12)+\frac12}
      \| W_\varepsilon \|_{L^2(\Omega_\varepsilon)}
 +    \overline{c}_m \ \varepsilon^{\alpha(m-\frac12)+1}
      \sum\limits_{i=1}^3
      \| W_\varepsilon \|_{L^2(\Gamma_\varepsilon^{(i)})}
$$
$$
 \leq {C}_m \, \varepsilon^{\alpha(m-\frac12)+\frac12}
      \|\nabla W_\varepsilon\|_{L^2(\Omega_\varepsilon)}.
$$
This, in turn, means the asymptotic estimate (\ref{t0}) and proves the theorem.
\end{proof}

\begin{corollary}\label{corollary1}
The differences between the solution $u_\varepsilon$ of problem $(\ref{probl})$ and the partial sums $U_\varepsilon^{(0)}, \ U_\varepsilon^{(1)}$ $($see $(\ref{aaN}))$ admit the following asymptotic estimates:
\begin{equation}\label{t5}
 \| \, u_\varepsilon - U_\varepsilon^{(0)} \|_{H^1(\Omega_\varepsilon)} \leq \widetilde{C}_0 \,
 \varepsilon^{1+\frac\alpha2}, \qquad
\| \, u_\varepsilon - U_\varepsilon^{(0)} \|_{L^2(\Omega_\varepsilon)} \leq \widetilde{C}_0 \,
 \varepsilon^{\frac32\alpha+\frac12},
\end{equation}
\begin{equation}\label{t6}
 \| \, u_\varepsilon - U_\varepsilon^{(1)} \|_{H^1(\Omega_\varepsilon)} \leq \widetilde{C}_0 \,
 \varepsilon^{1+\alpha},
\end{equation}
where $\alpha$ is a fixed number from the interval $(\frac23, 1).$

In thin cylinders
$\Omega_{\varepsilon,\alpha}^{(i)} :=
 \Omega_\varepsilon^{(i)} \cap \big\{ x\in \Bbb{R}^3 : \
 x_i\in I_{\varepsilon, \alpha}^{(i)}:= (3\ell\varepsilon^\alpha, 1) \big\}, \ i=1,2,3,$
the following estimates hold:
\begin{equation}\label{t7}
 \| \, u_\varepsilon - \omega_0^{(i)} \|_{H^1(\Omega_{\varepsilon,\alpha}^{(i)})} \leq \widetilde{C}_1 \,
 \varepsilon^{2}, \ \ i=1,2,3,
\end{equation}
where
$\{\omega_0^{(i)}\}_{i=1}^3$ is the solution of the limit problem~$(\ref{main}).$

In the  neighbourhood
$\Omega^{(0)}_{\varepsilon, \ell} :=
 \Omega_\varepsilon\cap \big\{ x : \ \ x_i<2\ell\varepsilon, \ i=1,2,3 \big\}$
of the aneurysm $\Omega^{(0)}_{\varepsilon},$ we get estimates
 \begin{equation}\label{t-joint0}
     \| \, \nabla_{x}u_\varepsilon  - \nabla_{\xi} \, N_1\|_{L^2(\Omega^{(0)}_{\varepsilon, \ell})}
 \le \| \,
      u_\varepsilon - \omega_0^{(i)}(0) - \varepsilon \, N_1
     \|_{H^1(\Omega^{(0)}_{\varepsilon, \ell})}
 \le \widetilde{C}_4 \, \varepsilon^{\frac52},
\end{equation}
 \end{corollary}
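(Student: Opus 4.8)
The plan is to derive every inequality from Theorem~\ref{mainTheorem} by the triangle inequality, writing
\[
u_\varepsilon - U_\varepsilon^{(m')} = \bigl(u_\varepsilon - U_\varepsilon^{(m)}\bigr) + \bigl(U_\varepsilon^{(m)} - U_\varepsilon^{(m')}\bigr),\qquad m>m',
\]
and estimating the retained tail $U_\varepsilon^{(m)} - U_\varepsilon^{(m')} = \sum_{k=m'+1}^{m}\varepsilon^{k}\bigl(\overline{u}_k+\overline{\Pi}_k+\overline{N}_k\bigr)$ directly. For each target I would fix a finite $m$ (depending on $\alpha\in(\tfrac23,1)$) so large that the remainder bound $\|u_\varepsilon - U_\varepsilon^{(m)}\|_{H^1(\Omega_\varepsilon)}\le C_m\varepsilon^{\alpha(m-\frac12)+\frac12}$ of Theorem~\ref{mainTheorem} is of strictly higher order than the claimed one; the stated exponent then comes from the lowest-order surviving term of the tail. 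Concretely, $m=2,3,3,4$ suffice for \eqref{t5}, \eqref{t6}, \eqref{t7}, \eqref{t-joint0}, respectively.

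Everything rests on three elementary scaling facts, obtained by passing to the fast variables and using $\mathrm{meas}(\Omega_\varepsilon)=\mathcal{O}(\varepsilon^2)$. A longitudinal profile $\omega_k^{(i)}(x_i)$ satisfies $\|\omega_k^{(i)}\|_{H^1(\Omega_\varepsilon^{(i)})}=\mathcal{O}(\varepsilon)$; a profile $u_k^{(i)}(x_i,\overline{x}_i/\varepsilon)$ with genuine transverse dependence satisfies $\|u_k^{(i)}\|_{L^2}=\mathcal{O}(\varepsilon)$ but $\|\nabla u_k^{(i)}\|_{L^2}=\mathcal{O}(1)$; and a boundary-layer profile $\Pi_k^{(i)}$, confined by \eqref{as_estimates} to a layer of thickness $\mathcal{O}(\varepsilon)$, satisfies $\|\Pi_k^{(i)}\|_{L^2}=\mathcal{O}(\varepsilon^{3/2})$, $\|\nabla\Pi_k^{(i)}\|_{L^2}=\mathcal{O}(\varepsilon^{1/2})$. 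The only delicate family is the inner one: on the junction support $\{x_i\lesssim\varepsilon^\alpha\}$ of $\overline{N}_k$ the profile grows like $N_k\sim G_k=\mathcal{O}(|\xi_i|^{k})$ by \eqref{Psi_k}, so with $\xi_i=x_i/\varepsilon$ one integrates $|\xi_i|^{2k}$, resp.\ $\varepsilon^{-2}|\xi_i|^{2k-2}$, against $\varepsilon^2\,dx_i$ over $(0,3\ell\varepsilon^\alpha)$ to obtain $\|\varepsilon^{k}\overline{N}_k\|_{L^2}=\mathcal{O}(\varepsilon^{1+(2k+1)\alpha/2})$ and $\|\varepsilon^{k}\nabla\overline{N}_k\|_{L^2}=\mathcal{O}(\varepsilon^{1+(2k-1)\alpha/2})$; the cut-off derivatives, carrying $\varepsilon^{-\alpha}$ on the shell $\{2\ell\varepsilon^\alpha\le x_i\le 3\ell\varepsilon^\alpha\}$, reproduce the same orders, while on the bounded core $\Omega_\varepsilon^{(0)}$, where $\xi$ stays in a fixed set, $\|\varepsilon^{k}\overline{N}_k\|_{H^1}=\mathcal{O}(\varepsilon^{k+1/2})$.

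With these in hand the global bounds are immediate. For \eqref{t5} the lowest-order surviving gradient is that of $\varepsilon\,\overline{N}_1$, of order $\varepsilon^{1+\alpha/2}$ (the $k=1$ gradient formula), all other retained terms and the $m=2$ remainder being of strictly higher order; in $L^2$ every retained term is $o(\varepsilon^{3\alpha/2+1/2})$, so the $L^2$ bound is supplied by the remainder itself. For \eqref{t6} the tail $U_\varepsilon^{(m)}-U_\varepsilon^{(1)}$ is dominated by $\varepsilon^2\overline{u}_2$, of order $\varepsilon^2=o(\varepsilon^{1+\alpha})$. For the two localized bounds the cut-offs do the work: on $\Omega_{\varepsilon,\alpha}^{(i)}=\{x_i>3\ell\varepsilon^\alpha\}$ one has $\chi_\ell^{(i)}\equiv1$ and the remaining cut-offs vanish, so $\overline{N}_k\equiv0$ there and, away from $x_i=1$, the boundary layer is exponentially small; hence $U_\varepsilon^{(m)}-\omega_0^{(i)}=\varepsilon\omega_1^{(i)}+\sum_{k\ge2}\varepsilon^{k}\bigl(u_k^{(i)}+\omega_k^{(i)}+\chi_\delta^{(i)}\Pi_k^{(i)}\bigr)$ has $H^1$-norm $\mathcal{O}(\varepsilon^2)$, coming from $\varepsilon\omega_1^{(i)}$ and $\varepsilon^2u_2^{(i)}$, which is \eqref{t7}. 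On $\Omega^{(0)}_{\varepsilon,\ell}=\{x_i<2\ell\varepsilon\}$ all $\chi_\ell^{(i)}$ and $\chi_\delta^{(i)}$ vanish, so $U_\varepsilon^{(m)}=\sum_{k=0}^{m}\varepsilon^{k}N_k$ with $N_0=\omega_0^{(1)}(0)$; therefore $u_\varepsilon-\omega_0^{(i)}(0)-\varepsilon N_1=(u_\varepsilon-U_\varepsilon^{(m)})+\sum_{k=2}^{m}\varepsilon^{k}N_k$, and since $\xi=x/\varepsilon$ ranges over a fixed bounded set the core scaling gives $\|\varepsilon^{k}N_k\|_{H^1(\Omega^{(0)}_{\varepsilon,\ell})}=\mathcal{O}(\varepsilon^{k+1/2})$, the lowest being $\varepsilon^{5/2}$ at $k=2$; this is \eqref{t-joint0}. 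Its left-hand inequality is free, since $\nabla_x\bigl(\varepsilon N_1(x/\varepsilon)\bigr)=(\nabla_\xi N_1)(x/\varepsilon)$ and $\omega_0^{(i)}(0)$ is constant.

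The main obstacle is the bookkeeping for the inner part. In contrast to the other two families, the $N_k$ grow polynomially, so their contribution is governed by the competition between the growth $|\xi|^{k}$ and the cut-off scale $\varepsilon^{\alpha}$; this is precisely where the range $\alpha\in(\tfrac23,1)$ and the fractional exponents $\tfrac{\alpha}{2},\tfrac{3\alpha}{2}$ originate, and one must check simultaneously that the $\varepsilon^{-\alpha}$ cut-off-derivative terms on the shell remain subdominant and that the exponentially decaying remainders $N_k-G_k$ and $\Pi_k^{(i)}$ (Remark~\ref{rem_exp-decrease}, \eqref{as_estimates}) add nothing beyond the orders above. Once the three scaling estimates are established, the rest is routine.
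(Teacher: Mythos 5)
Your proposal is correct and follows essentially the same route as the paper: both apply Theorem~\ref{mainTheorem} with a suitably large $m$, split off the retained tail $U_\varepsilon^{(m)}-U_\varepsilon^{(m')}$ by the triangle inequality, and bound its terms by the scaling of the regular, boundary-layer and inner profiles (with the polynomially growing $N_k$ on the $\varepsilon^\alpha$-scale cut-off region supplying the dominant $\varepsilon^{1+\alpha/2}$ contribution, and the $m=2$ remainder supplying the $L^2$ bound in \eqref{t5}). The only differences are cosmetic: you take $m=3$ where the paper takes $m=4$ for \eqref{t6} and \eqref{t7} (both choices work for $\alpha\in(\tfrac23,1)$), and you package the term-by-term estimates of the paper's proof into three general scaling facts.
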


\begin{proof} Denote by $\chi_{\ell,\alpha,\varepsilon}^{(i)}(\cdot) := \chi_\ell^{(i)}(\tfrac{\cdot}{\varepsilon^\alpha})$ (the function $\chi_\ell^{(i)}$
is determined in (\ref{cut-off-functions})).
Using the smoothness of the functions $\{\omega_k^{(i)}\}$ and the exponential decay of  the functions
$\{ N_k-G_k \}$ and $\{\Pi_{k}^{(i)}\}, \ i=1,2,3,$ at infinity, we deduce the inequality
(\ref{t5}) from  estimate (\ref{t0}) at $m=2$:
$$
     \left\| \, u_\varepsilon-U^{(0)}_{\varepsilon}\right\|_{H^1(\Omega_\varepsilon)}
 \le \left\| \, u_\varepsilon-U^{(2)}_{\varepsilon}\right\|_{H^1(\Omega_\varepsilon)}
   + \Bigg\| \,
      \varepsilon \sum_{i=1}^3
      \bigg(
       \chi_{\ell,\alpha,\varepsilon}^{(i)} \, \omega_1^{(i)}
   +   \bigg( 1 - \sum_{i=1}^3 \chi^{(i)}_{\ell,\alpha,\varepsilon} \bigg) \, N_1
      \bigg)
$$
$$
   +  \varepsilon^{2}
      \left(
       \sum_{i=1}^3
       \Big(
        \chi^{(i)}_{\ell,\alpha,\varepsilon} \,(u_{2}^{(i)} + \omega_{2}^{(i)})
   +    \chi^{(i)}_\delta \Pi^{(i)}_{2}
       \Big)
   +   \bigg( 1 - \sum_{i=1}^3 \chi_{\ell,\alpha,\varepsilon}^{(i)} \, \bigg) {N}_{2}
      \right)
     \Bigg\|_{H^1(\Omega_\varepsilon)}  \le C_2  \varepsilon^{\frac32\alpha+\frac12}
$$
$$
  + \sum\limits_{i=1}^3
     \bigg\| \,
      \varepsilon \left( \chi_{\ell,\alpha,\varepsilon}^{(i)} \omega_1
   +  \big(1-\chi_{\ell,\alpha,\varepsilon}^{(i)}\big) N_1 \right)
   +  \varepsilon^{2} \left( \chi_{\ell,\alpha,\varepsilon}^{(i)}(u_{2} + \omega_{2})
   +  \big(1-\chi_{\ell,\alpha,\varepsilon}^{(i)}\big) N_2 \right)
     \bigg\|_{H^1(\Omega^{(i)}_\varepsilon)}
$$
$$
   + \varepsilon   \left\| N_1 \right\|_{H^1(\Omega_\varepsilon^{(0)})}
   + \varepsilon^2 \left\| N_2 \right\|_{H^1(\Omega_\varepsilon^{(0)})}
   + \varepsilon^2 \sum_{i=1}^3
     \left\| \, \chi_\delta^{(i)} \Pi_2^{(i)} \right\|_{H^1(\Omega_\varepsilon^{(i)})}
$$
$$
 \le C_2 \, \varepsilon^{\frac32\alpha+\frac12}
   + \sum_{i=1}^3
     \bigg\|
      \big(1-\chi_{\ell,\alpha,\varepsilon}^{(i)}\big) \,
      \Big(
       x_i \frac{d\omega_0^{(i)}}{dx_i}(0)
   +   \frac{x_i^2}{2} \frac{d^2\omega_0^{(i)}}{dx_i^2}(0)
       \Big)
     \bigg\|_{H^1(\Omega_\varepsilon^{(i)})}
$$
$$
   + \varepsilon \sum_{i=1}^3
     \bigg\|
      \big(1-\chi_{\ell,\alpha,\varepsilon}^{(i)}\big) \,
      \Big( \omega_1^{(i)}(0) + x_i \frac{d\omega_1^{(i)}}{dx_i}(0) - \omega_1^{(i)} \Big)
     \bigg\|_{H^1(\Omega_\varepsilon^{(i)})}
$$
$$
   + \varepsilon^2 \sum_{i=1}^3
           \left\|
       \big(1-\chi_{\ell,\alpha,\varepsilon}^{(i)}\big) \,
       \big(u_2^{(i)}(0,\cdot) - u_2^{(i)} \big)
      \right\|_{H^1(\Omega_\varepsilon^{(i)})}
+  \varepsilon^2 \sum_{i=1}^3 \left\|
       \big(1-\chi_{\ell,\alpha,\varepsilon}^{(i)}\big) \,
       \big(\omega_2^{(i)}(0) - \omega_2^{(i)}\big)
      \right\|_{H^1(\Omega_\varepsilon^{(i)})}
$$
$$
   + \sum_{i=1}^3
     \left(
      \varepsilon
      \left\| \,
       \omega_1^{(i)}
      \right\|_{H^1(\Omega_\varepsilon^{(i)})}
   +  \varepsilon^2
      \left\| \,
       u_2^{(i)} + \omega_2^{(i)}
      \right\|_{H^1(\Omega_\varepsilon^{(i)})}
     \right)
$$
$$
   + \sum_{i=1}^3
     \bigg(
      \varepsilon
      \left\| \,
       \big(1-\chi_{\ell,\alpha,\varepsilon}^{(i)}\big) \,  ( N_1 - G_1 )
      \right\|_{H^1(\Omega_\varepsilon^{(i)})}
   +  \varepsilon^2
      \left\|
       \big(1-\chi_{\ell,\alpha,\varepsilon}^{(i)}\big) \, ( N_2 - G_2 )
      \right\|_{H^1(\Omega_\varepsilon^{(i)})}
     \bigg)
$$
$$
   + \varepsilon^\frac32 \left\| N_1 \right\|_{H^1(\Xi^{(0)})}
   + \varepsilon^\frac52 \left\| N_2 \right\|_{H^1(\Xi^{(0)})}
   + \varepsilon^2 \sum_{i=1}^3
     \left\| \, \chi_\delta^{(i)} \Pi_2^{(i)} \right\|_{H^1(\Omega_\varepsilon^{(i)})}
 \le \widetilde{C}_0 \, \varepsilon^{1+\frac{\alpha}{2}}.
$$
To prove the second estimate in (\ref{t5}), we need to calculate the $L^2$-norm of terms in the right-hand side of the previous inequality.

The inequality (\ref{t6}) can be similarly obtained  from the estimate (\ref{t0}) at $m=4$.

Again with the help of estimate (\ref{t0}) at $m=4,$ we deduce
\begin{gather*}
      \left\| \,
       u_\varepsilon - \omega_0^{(i)}
      \right\|_{H^1(\Omega_{\varepsilon,\alpha}^{(i)})}
 \leq \left\| \,
       u_\varepsilon-U^{(4)}_{\varepsilon}
      \right\|_{H^1(\Omega_\varepsilon)}
    + \varepsilon
      \left\| \,
       \omega_{1}^{(i)}
      \right\|_{H^1(\Omega_{\varepsilon,\alpha}^{(i)})}
\\
    + \sum_{k=2}^4 \varepsilon^{k}
      \left\| \,
       u_{k}^{(i)} + \omega_{k}^{(i)} + \chi_\delta^{(i)} \Pi_{k}^{(i)}
      \right\|_{H^1(\Omega_{\varepsilon,\alpha}^{(i)})}
 \leq \widetilde{C}_2 \, \varepsilon^{2},
\end{gather*}
whence we get (\ref{t7}).

From inequality
$$
      \left\| \,
       u_\varepsilon - \omega_2^{(i)}(0) - \varepsilon \, N_1
      \right\|_{H^1(\Omega^{(0)}_{\varepsilon, \ell})}
 \leq \left\| \,
       u_\varepsilon - U^{(4)}_{\varepsilon}
      \right\|_{H^1(\Omega_\varepsilon)}
    + \sum_{k=2}^4 \varepsilon^k
      \| N_k \|_{H^1(\Omega^{(0)}_{\varepsilon, \ell})}
 \leq \widetilde{C}_4 \, \varepsilon^{\frac52}
$$
it follows more better energetic estimate (\ref{t-joint0}) in a neighbourhood of the aneurysm $\Omega^{(0)}_\varepsilon.$
\end{proof}

Using the Cauchy-Buniakovskii-Schwarz inequality and  the continuously embedding of
the space $H^1(I_{\varepsilon, \alpha}^{(i)})$  in $C\big(\overline{I_{\varepsilon, \alpha}^{(i)}}\big),$
it follows from (\ref{t7}) the following corollary.

\begin{corollary}\label{corollary2}
If $h_i(x_i) \equiv h_i \equiv const, \, (i=1,2,3),$ then
\begin{equation}\label{t9}
 \| \, E^{(i)}_\varepsilon(u_\varepsilon) - \omega_0^{(i)}  \|_{H^1(I_{\varepsilon, \alpha}^{(i)})} \leq \widetilde{C}_2 \,
 \varepsilon,
\end{equation}
\begin{equation}\label{t10}
\max_{x_i\in \overline{I_{\varepsilon, \alpha}^{(i)}}} \left| \, E^{(i)}_\varepsilon(u_\varepsilon)(x_i) - \omega_0^{(i)}(x_i)
\right|  \leq \widetilde{C}_3 \,  \varepsilon, \ \ i=1,2,3,
\end{equation}
where
$$
\big(E^{(i)}_\varepsilon u_\varepsilon\big)(x_i)
 = \frac{1}{\pi \varepsilon^2\, h_i^2}
   \int_{\Upsilon^{(i)}_\varepsilon(0)}
   u_\varepsilon(x)\, d\overline{x}_i,
\quad i=1,2,3.
$$
\end{corollary}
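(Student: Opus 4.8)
The plan is to reduce both estimates to the already-proved bound \eqref{t7} by analysing the averaging operator $E^{(i)}_\varepsilon$ as a bounded map from $H^1(\Omega^{(i)}_{\varepsilon,\alpha})$ into $H^1(I^{(i)}_{\varepsilon,\alpha})$ with a sharp $\varepsilon$-dependent operator norm. First I would observe that $\omega_0^{(i)}$ depends only on the longitudinal variable $x_i$, so averaging it over the cross-section reproduces it: since $|\Upsilon^{(i)}_\varepsilon(0)| = \pi\varepsilon^2 h_i^2$ one has $E^{(i)}_\varepsilon\omega_0^{(i)} = \omega_0^{(i)}$. By linearity of $E^{(i)}_\varepsilon$ this gives $E^{(i)}_\varepsilon(u_\varepsilon) - \omega_0^{(i)} = E^{(i)}_\varepsilon(u_\varepsilon - \omega_0^{(i)})$, so it suffices to bound the image under $E^{(i)}_\varepsilon$ of $w := u_\varepsilon - \omega_0^{(i)}$.

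Next I would estimate $E^{(i)}_\varepsilon$ in the $L^2$-norm. Applying the Cauchy--Buniakovskii--Schwarz inequality to the cross-sectional integral and using $|\Upsilon^{(i)}_\varepsilon(0)| = \pi\varepsilon^2 h_i^2$ yields the pointwise bound $|E^{(i)}_\varepsilon w(x_i)|^2 \le (\pi\varepsilon^2 h_i^2)^{-1}\int_{\Upsilon^{(i)}_\varepsilon(0)}|w(x_i,\overline{x}_i)|^2\,d\overline{x}_i$; integrating in $x_i$ over $I^{(i)}_{\varepsilon,\alpha}$ turns the right-hand side into $(\pi\varepsilon^2 h_i^2)^{-1}\|w\|_{L^2(\Omega^{(i)}_{\varepsilon,\alpha})}^2$. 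Here the hypothesis $h_i \equiv \mathrm{const}$ is essential: because the disk $\Upsilon^{(i)}_\varepsilon(0)$ does not depend on $x_i$, differentiation in $x_i$ commutes with the averaging, $\tfrac{d}{dx_i}E^{(i)}_\varepsilon w = E^{(i)}_\varepsilon(\partial_{x_i}w)$, and the same computation applied to $\partial_{x_i}w$ (bounded by $|\nabla w|$) controls the derivative. Combining the two bounds gives $\|E^{(i)}_\varepsilon w\|_{H^1(I^{(i)}_{\varepsilon,\alpha})} \le (\sqrt{\pi}\,\varepsilon h_i)^{-1}\|w\|_{H^1(\Omega^{(i)}_{\varepsilon,\alpha})}$, and inserting \eqref{t7}, whose right-hand side is $\widetilde{C}_1\varepsilon^2$, absorbs the $\varepsilon^{-1}$ loss and produces \eqref{t9}.

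Finally, \eqref{t10} follows from \eqref{t9} by the continuous embedding $H^1(I^{(i)}_{\varepsilon,\alpha}) \hookrightarrow C(\overline{I^{(i)}_{\varepsilon,\alpha}})$ applied to the scalar one-variable function $E^{(i)}_\varepsilon(u_\varepsilon) - \omega_0^{(i)}$. The main obstacle, and the only genuinely delicate point, is to check that all constants are independent of $\varepsilon$. For the embedding this is automatic because the interval $I^{(i)}_{\varepsilon,\alpha} = (3\ell\varepsilon^\alpha,1)$ has length bounded away from $0$ and $+\infty$ uniformly in $\varepsilon$, so the one-dimensional Sobolev embedding constant may be chosen uniform; for the averaging estimate the only $\varepsilon$-dependence has already been tracked explicitly as the factor $(\sqrt{\pi}\,\varepsilon h_i)^{-1}$, which combines cleanly with the $\varepsilon^2$ from \eqref{t7}. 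Once these uniformities are verified, \eqref{t9} and \eqref{t10} are immediate.
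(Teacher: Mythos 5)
Your proposal is correct and follows essentially the same route as the paper, which derives the corollary from estimate (\ref{t7}) precisely by the Cauchy--Buniakovskii--Schwarz inequality applied to the cross-sectional average (where $h_i \equiv const$ makes $E^{(i)}_\varepsilon$ commute with $d/dx_i$ and reproduce $\omega_0^{(i)}$) together with the embedding $H^1(I_{\varepsilon,\alpha}^{(i)}) \hookrightarrow C\big(\overline{I_{\varepsilon,\alpha}^{(i)}}\big)$. Your version simply makes explicit the $\varepsilon$-bookkeeping (the factor $(\sqrt{\pi}\,\varepsilon h_i)^{-1}$ absorbed by the $\varepsilon^2$ in (\ref{t7}) and the uniformity of the embedding constant) that the paper leaves to the reader.
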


\begin{corollary}\label{corollary3}
If to the assumptions $h_i(x_i) \equiv h_i \equiv const, \, i=1,2,3,$ the function $\varphi_\varepsilon \equiv 0$ and $f=f(x_i), \ x\in\Omega_\varepsilon^{(i)} \, i=1,2,3,$ then  the asymptotic expansion for the solution $u_\varepsilon$ has the following more simple form:
\begin{equation}\label{asymp_expansion1}
     \sum\limits_{k=0}^{+\infty} \varepsilon^{k}
     \Bigg(
      \sum_{i=1}^3 \chi_\ell^{(i)} \left(\frac{x_i}{\varepsilon^\alpha}\right) \omega_{k}^{(i)}(x_i)
   +  \left( 1 - \sum_{i=1}^3 \chi_\ell^{(i)} \left(\frac{x_i}{\varepsilon^\alpha}\right) \right)
      {N}_{k} \left(\frac{x}{\varepsilon}\right)
     \Bigg),
\quad x\in\Omega_\varepsilon,
\end{equation}
and the asymptotic estimates are improved:
\begin{equation}\label{t0+}
      \| \, u_\varepsilon - U_\varepsilon^{(m)}\|_{H^1(\Omega_\varepsilon)}
 \leq {C}_m \ \varepsilon^{\alpha (m\,-\frac{1}{2}) + 1 }.
\end{equation}
\begin{equation}\label{t8+}
\|u_\varepsilon - \omega^{(i)}_0 - \varepsilon \omega^{(i)}_1 \|_{H^1(\Omega_{\varepsilon,\alpha}^{(i)})}
 \leq {C}_1 \, \varepsilon^{3}, \ \ i=1,2,3;
\end{equation}
\begin{equation}\label{t9+}
      \|
       E^{(i)}_\varepsilon u_\varepsilon - \omega^{(i)}_0 - \varepsilon \omega^{(i)}_1
      \|_{H^1(I_{\varepsilon, \alpha}^{(i)})}
 \leq {C}_2 \, \varepsilon^{2}, \ \ i=1,2,3;
\end{equation}
\begin{equation}\label{t10+}
\max_{x\in \overline{I_{\varepsilon, \alpha}^{(i)}}}
\left| \big(E^{(i)}_\varepsilon u_\varepsilon\big)(x_i) - \omega^{(i)}_0(x_i) - \varepsilon \omega^{(i)}_1(x_i) \right|
 \leq {C}_3 \,  \varepsilon^{2}, \ \ i=1,2,3.
\end{equation}
\end{corollary}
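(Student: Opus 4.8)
The plan is to treat this Corollary as a specialization of the whole construction and of Theorem~\ref{mainTheorem}, exploiting that the three hypotheses $h_i\equiv\mathrm{const}$, $\varphi_\varepsilon\equiv0$ and $f=f(x_i)$ annihilate essentially all of the ``transversal'' data. I would proceed in three stages: first show that the transversal correctors $u_k^{(i)}$ and the boundary-layer terms $\Pi_k^{(i)}$ vanish identically, which collapses the ansatz to (\ref{asymp_expansion1}); then re-run the justification of Theorem~\ref{mainTheorem}, tracking which residuals survive, to obtain the sharpened global estimate (\ref{t0+}); and finally localize (\ref{t0+}) on the truncated cylinders and average over cross-sections to get (\ref{t8+})--(\ref{t10+}).

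For the first stage I would argue by induction on $k$. Because $h_i'\equiv0$ and $\varphi^{(i)}\equiv0$, every Neumann datum in (\ref{regul_probl_2}), (\ref{regul_probl_3}), (\ref{regul_probl_k}) becomes homogeneous, and because $f=f(x_i)$ all transversal derivatives of $f$ vanish, so $f_k^{(i)}\equiv0$ for $k\ge1$ while $f_0^{(i)}=f(x_i)$ is constant in $\overline{\xi}_i$. In the base case (\ref{new_regul_probl_2}) the solvability relation (\ref{omega_probl_2}) forces $\tfrac{d^2\omega_0^{(i)}}{dx_i^2}=-f$, so the right-hand side $\tfrac{d^2\omega_0^{(i)}}{dx_i^2}+f$ of the equation is zero; with homogeneous Neumann data and zero mean this gives $u_2^{(i)}\equiv0$. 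Assuming $u_2^{(i)}=\dots=u_{k-1}^{(i)}=0$, the solvability condition (\ref{omega_probl_k}) reduces, all its terms involving $f_{k-2}^{(i)}$, $\varphi^{(i)}$ and $u_{k-2}^{(i)}$ having disappeared, to $h_i^2\,\tfrac{d^2\omega_{k-2}^{(i)}}{dx_i^2}=0$, whence the equation for $u_k^{(i)}$ has zero right-hand side, homogeneous Neumann data and zero mean, so $u_k^{(i)}\equiv0$. Consequently $\Phi_k^{(i)}=-u_k^{(i)}(1,\cdot)-\omega_k^{(i)}(1)=0$ by (\ref{bv_left}), and (\ref{view_solution}) gives $\Pi_k^{(i)}\equiv0$ for every $k$. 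Hence $\overline{u}_k=\sum_i\chi_\ell^{(i)}\omega_k^{(i)}$ and $\overline{\Pi}_k\equiv0$, which is exactly the reduced series (\ref{asymp_expansion1}).

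For the second stage I would substitute the reduced partial sum into (\ref{probl}) and recompute the residuals. The decisive point is that \emph{both} surface residuals vanish: in (\ref{breve_R_8}) the factor $h_i'(x_i)$ kills the first bracket while $\varphi^{(i)}\equiv0$ kills the second, so $\breve{R}^{(m)}_{\varepsilon,8,(i)}\equiv0$, and (\ref{breve_R_9}) is proportional to $\partial^{m-1}\varphi^{(i)}/\partial y_i^{m-1}\equiv0$, so $\breve{R}^{(m)}_{\varepsilon,9,(i)}\equiv0$; likewise $R^{(m)}_{\varepsilon,3}\equiv0$ (it carries $\Pi_k^{(i)}$) and $R^{(m)}_{\varepsilon,4}\equiv0$ (it carries transversal derivatives of $f$). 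The surviving terms $R^{(m)}_{\varepsilon,1},R^{(m)}_{\varepsilon,5},R^{(m)}_{\varepsilon,6},R^{(m)}_{\varepsilon,7}$ retain the bounds (\ref{t3_14}), (\ref{t3_5}), (\ref{t3_67}), so $\|R^{(m)}_\varepsilon\|_{L^2(\Omega_\varepsilon)}\le C_m\,\varepsilon^{\alpha(m-\frac12)+1}$, while $R^{(m)}_{\varepsilon,2}$ stays exponentially small. Since there is no longer any boundary contribution, the energy identity collapses to $\int_{\Omega_\varepsilon}|\nabla W_\varepsilon|^2\,dx=\int_{\Omega_\varepsilon}R^{(m)}_\varepsilon\,W_\varepsilon\,dx$, and Cauchy--Schwarz together with the Friedrichs inequality give $\|\nabla W_\varepsilon\|_{L^2(\Omega_\varepsilon)}\le C_m\,\varepsilon^{\alpha(m-\frac12)+1}$, which is (\ref{t0+}). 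The gain of the factor $\varepsilon^{1/2}$ over (\ref{t0}) is precisely the disappearance of the lateral-surface term, whose trace estimate caused the loss in Theorem~\ref{mainTheorem}.

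The third stage is then routine. On $\Omega_{\varepsilon,\alpha}^{(i)}$ the cut-off $\chi_\ell^{(i)}$ equals $1$ and the inner part is switched off, so the reduced partial sum equals $\sum_{k=0}^m\varepsilon^k\omega_k^{(i)}(x_i)$; taking $m=4$ in (\ref{t0+}) (so that $\alpha(4-\tfrac12)+1>3$ for $\alpha>\tfrac23$) and adding $\big\|\sum_{k=2}^4\varepsilon^k\omega_k^{(i)}\big\|_{H^1(\Omega_{\varepsilon,\alpha}^{(i)})}\le C\varepsilon^{3}$, which holds because each $\omega_k^{(i)}$ is smooth in $x_i$ and the cross-section has area $\sim\varepsilon^2$, yields (\ref{t8+}) by the triangle inequality. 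Applying the averaging operator $E^{(i)}_\varepsilon$, which (since $h_i$ is constant, the cross-section does not depend on $x_i$) satisfies $\|E^{(i)}_\varepsilon v\|_{H^1(I_{\varepsilon,\alpha}^{(i)})}\le C\varepsilon^{-1}\|v\|_{H^1(\Omega_{\varepsilon,\alpha}^{(i)})}$ and acts as the identity on functions of $x_i$, turns (\ref{t8+}) into (\ref{t9+}); and the uniform embedding $H^1(I_{\varepsilon,\alpha}^{(i)})\hookrightarrow C(\overline{I_{\varepsilon,\alpha}^{(i)}})$, whose constant is controlled because $|I_{\varepsilon,\alpha}^{(i)}|\to1$, turns (\ref{t9+}) into (\ref{t10+}). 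I expect the only delicate point to be the second stage: one must check that every surviving volume residual still obeys the $\varepsilon^{\alpha(m-\frac12)+1}$ bound and that no hidden surface term remains, since the entire improvement rests on the boundary term dropping out of the energy identity.
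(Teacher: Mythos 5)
Your overall route --- specialize the construction, show $u_k^{(i)}\equiv0$ and $\Pi_k^{(i)}\equiv0$, re-run the energy argument without boundary residuals, then localize and average over cross-sections --- is exactly the intended derivation (the paper states this corollary without a separate proof), and your stages 1 and 3 are correct. But stage 2 contains a genuine gap at precisely the point you yourself flagged as delicate. You claim that $R^{(m)}_{\varepsilon,1}$ ``retains the bound (\ref{t3_14})'', i.e. $\|R^{(m)}_{\varepsilon,1}\|_{L^2(\Omega_\varepsilon)}\le C\varepsilon^{m}$, and that this yields $\|R^{(m)}_{\varepsilon}\|_{L^2(\Omega_\varepsilon)}\le C_m\varepsilon^{\alpha(m-\frac12)+1}$. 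That inference requires $\varepsilon^{m}\le C\varepsilon^{\alpha(m-\frac12)+1}$, i.e. $m\ge\alpha\big(m-\tfrac12\big)+1$, i.e. $\alpha\le\frac{m-1}{m-1/2}$. For $m=2$ this reads $\alpha\le\frac23$, which contradicts $\alpha\in(\frac23,1)$; so for $m=2$ (and for $m=3$ when $\alpha>\frac45$, for $m=4$ when $\alpha>\frac67$, etc.) the bound $\varepsilon^{m}$ is strictly weaker than the claimed $\varepsilon^{\alpha(m-\frac12)+1}$, and (\ref{t0+}) does not follow from your chain of estimates.

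The missing observation --- which you in fact already derived in stage 1 but did not carry into stage 2 --- is that under the hypotheses every right-hand side $\widehat F_k^{(i)}$, $k\ge1$, in (\ref{right-hand-side_k}) vanishes identically ($f_k^{(i)}\equiv0$ for $k\ge1$, $\varphi^{(i)}\equiv0$, $h_i'\equiv0$), so each $\omega_k^{(i)}$ with $k\ge1$ solves $\frac{d^2\omega_k^{(i)}}{dx_i^2}=0$ and is affine in $x_i$. Since $R^{(m)}_{\varepsilon,1}$ in (\ref{t1_1}) involves only $\frac{\partial^2 u_k^{(i)}}{\partial x_i^2}$ and $\frac{d^2\omega_k^{(i)}}{dx_i^2}$ with $k\in\{m-1,m\}$, and $m\ge2$ forces $k\ge1$, you get $R^{(m)}_{\varepsilon,1}\equiv0$, just like $R^{(m)}_{\varepsilon,3}$, $R^{(m)}_{\varepsilon,4}$ and the surface residuals. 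With this, the only surviving volume residuals are $R^{(m)}_{\varepsilon,2}$ (exponentially small) and $R^{(m)}_{\varepsilon,5},R^{(m)}_{\varepsilon,6},R^{(m)}_{\varepsilon,7}$, which are $O(\varepsilon^{\alpha(m-\frac12)+1})$ by (\ref{t3_5})--(\ref{t3_67}); your energy identity then delivers (\ref{t0+}) for every $m\ge2$. Incidentally, your stage 3 is unaffected by this gap: with $m=4$ even the weaker exponent $\min\{m,\alpha(m-\tfrac12)+1\}$ exceeds $3$, so (\ref{t8+})--(\ref{t10+}) would survive; it is only (\ref{t0+}) itself, as a statement for all $m\ge2$, that needs the vanishing of $R^{(m)}_{\varepsilon,1}$.
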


\section{Conclusions}\label{Conclusions}

{\bf 1.}
An important problem of existing multi-scale methods is their stability and accuracy. The proof of the
error estimate between the constructed approximation and the exact solution is a general principle that has been
applied to the analysis of the efficiency of a multi-scale method. In our paper, we have constructed and justified the asymptotic
expansion for the solution to problem (\ref{probl}) and proved the corresponding estimates.
It should be noted here that we do not assume any orthogonality conditions for the right-hand sides in the equation and in the Neumann boundary conditions.

\smallskip

The results showed the possibility to replace the complex boundary-value problem (\ref{probl}) with
the corresponding $1$- dimensional boundary-value problem (\ref{main}) in the graph $I=\cup_{i=1}^3 I_i$  with sufficient accuracy measured by the parameter $\varepsilon$ characterizing the thickness and the local geometrical irregularity. In this regard, the uniform pointwise estimates (\ref{t10}) and (\ref{t10+}), that are very important for applied problems, also confirm this conclusion.

\medskip
\noindent
{\bf 2.}
To construct the asymptotic expansion in the whole domain, we have used the  method of matching asymptotic expansions with special cut-off functions. It is the natural approach for approximations of solutions to boundary-value problems in perturbed domains. In  comparison to the method
 of the partial asymptotic domain decomposition \cite{Pan-decom-1998}, this method gives better estimate
even for the first terms $\{\omega_0^{(i)}\}_{i=1}^3$ in the $L^2$-norm (compare (\ref{P}) and the second estimate in (\ref{t5}))
without any additional assumptions for the right-hand sides.

\medskip
\noindent
{\bf 3.}
The energetic estimate (\ref{t5}) partly confirms the first formal result of \cite{Gaudiello-Kolpakov} (see p.~296)  that the local geometric irregularity of  the analyzed structure  does not significantly affect on  the global-level properties of the framework, which are described by the limit problem (\ref{main}) and its solution $\{\omega^{(i)}_0\}_{i=1}^3$ (the first terms of the asymptotics).

Therefore, convergence results, which were obtained for second-order problems in a thin T-like shaped domain (see \cite{GGLM,GS} and references therein) cannot show the influence of the aneurysm.
But thanks  to estimates (\ref{t6}) and (\ref{t8+}) -- (\ref{t10+})
it became possible now to identify the impact of
the geometric irregularity and material characteristics of the aneurysm on the global level through  the second terms $\{\omega^{(i)}_1\}_{i=1}^3$ of the regular asymptotics (\ref{regul}). They  depend on the constants $d_1^*,$  $\delta_{1}^{(2)}$ and $\delta_{1}^{(3)}$ that take into account all those factors (see (\ref{delta_1}) and (\ref{d_1^*})).
This conclusion does not coincide with the second main result of \cite{Gaudiello-Kolpakov} (see p. 296) that
``{\it the joints of normal type manifest themselves on the local level only}''.

\smallskip

 In addition, in \cite{Gaudiello-Kolpakov} the authors stated that the main idea of their approach ``{\it
is to use a local perturbation corrector of the form $\varepsilon N({\bf x}/\varepsilon) \frac{d u_0}{dx_1}$ with the condition that the function $N({\bf y})$
is localized near the joint} '', i.e., $N({\bf y}) \to 0$ as $|{\bf y}| \to +\infty,$ and the main assumption of this approach is that $\nabla_{y} N \in L_1(Q_{\infty}).$

As shown the coefficients $\{N_k\}$ of the inner asymptotics (\ref{junc}) behave as polynomials at infinity and do not decrease exponentially (see (\ref{inner_asympt})).  Therefore, they influence directly the terms of the regular asymptotics beginning with the second terms.
Thus, the main assumption made in \cite{Gaudiello-Kolpakov} is not correct.

\medskip
\noindent
{\bf 4.}
From the first estimate in (\ref{t5}) it follows that the gradient $\nabla u_\varepsilon$ is equivalent to $\{\frac{d\omega^{(i)}_0}{dx_i}\}_{i=1}^3$ in the $L^2$-norm over whole junction $\Omega_\varepsilon$ as $\varepsilon \to 0.$
Obviously, this estimate is not informative in the neighbourhood $\Omega^{(0)}_{\varepsilon, l}$ of the aneurysm $\Omega^{(0)}_{\varepsilon}.$

Thanks to estimates (\ref{t6}) and (\ref{t-joint0}), we get  the approximation
of the gradient (flux) of the solution both in the curvilinear cylinders~$\Omega^{(i)}_{\varepsilon, \alpha},$ $i=1,2, 3$:
$$
\nabla u_\varepsilon(x)  \sim \frac{d\omega^{(i)}_0}{dx_i}(x_i) + \varepsilon \, \frac{d\omega^{(i)}_1}{dx_i}(x_i) \quad \text{as}\quad \varepsilon \to 0
$$
and in the neighbourhood $\Omega^{(0)}_{\varepsilon, l}$ of the aneurysm:
$$
\nabla u_\varepsilon(x) \sim \nabla_{\xi}\big({N}_{1}(\xi)\big)\Big|_{\xi=\frac{x}{\varepsilon}} \quad \text{as}\quad \varepsilon \to 0.
$$
Also using estimates (\ref{t0}), we can obtain better approximations for the solution and its gradient with preset accuracy
${\cal O}(\varepsilon^{\alpha (m\,-\frac{1}{2}) + \frac{1}{2} }),$ $\forall m\in \Bbb N.$

\medskip
\noindent
{\bf 5.}
We regard that the estimates (\ref{t9}) and (\ref{t10}) can be proved without the assumptions that $h_i(x_i) \equiv h_i \equiv const, \, (i=1,2,3).$
For this we should apply the following second energy inequality  in~$\Omega_\varepsilon:$
$$
\|u_\varepsilon\|_{H^2(\Omega_\varepsilon)} \le M \Big(\|u_\varepsilon\|_{H^1(\Omega_\varepsilon)} + \|f\|_{L^2(\Omega_\varepsilon)}
+ \sum_{i=1}^3 \|\varphi_\varepsilon\|_{H^{\frac12}(\Gamma^{(i)}_\varepsilon)}\Big).
$$
But the question how the constant $M$  depends on the parameter $\varepsilon$ remains open.
The answer will allow to get  better estimates both in the norms of energy spaces and in the uniform metric.


\end{document}